\newtheorem{proposition}{Proposition}[section]
\newtheorem{theorem}[proposition]{Theorem}
\newtheorem{lemma}[proposition]{Lemma}
\newtheorem{corollary}[proposition]{Corollary}
\newtheorem{example}[proposition]{Example}
\newenvironment{proof}{{\noindent \bf Proof:}}{\hfill $\fbox{}$ \vspace*{5mm}}
\newcommand{\BE}{\begin{equation}}
\newcommand{\EE}{\end{equation}}
\begin{document}
\begin{frontmatter}
\title{Preconditioned iterative methods for space-time fractional advection-diffusion equations}


\author{Zhi Zhao}\ead{zhaozhi231@163.com}
\address{Department of Mathematics, University of Macau, Macao, China}
\author{Xiao-Qing Jin\fnref{fn1}}\ead{xqjin@umac.mo}
\address{Department of Mathematics, University of Macau, Macao, China}

\author{Matthew M. Lin\corref{cor2}\fnref{fn2}}\ead{mhlin@ccu.edu.tw}
\address{Department of Mathematics, National Chung Cheng University, Chia-Yi 621, Taiwan}

\cortext[cor2]{Corresponding author}

\fntext[fn1]{The second
author was supported by the research grant MYRG098(Y2-L3)-FST13-JXQ from University of Macau.
}
\fntext[fn2]{The third author was supported 
 by the Ministry of Science and Technology of Taiwan under grant
NSC101-2115-M-194-007-MY3.}

\date{}

\begin{abstract}
In this paper we want to propose  practical numerical methods to
solve a class of initial-boundary problem of space-time fractional
advection-diffusion equations. To start with, an implicit method
based on two-sided Gr\"{u}nwald formulae is proposed with a discussion
of the stability and consistency.  Then, the preconditioned generalized
minimal residual (preconditioned GMRES) method and the preconditioned
conjugate gradient normal residual ({preconditioned} CGNR) method,
with an easily constructed preconditioner, are developed. Importantly,
because the resulting systems are Topelitz-like, the fast Fourier transform
can be applied to significantly reduce the computational cost. Numerical
experiments are implemented to show the efficiency of our preconditioner,
even with cases of variable coefficients.

%
\end{abstract}

\begin{keyword}
Fractional diffusion equations; Shifted Gr\"unwald discretization;
Toeplitz matrix; Preconditioner; Fast Fourier transform; CGNR method; GMRES method.

\MSC 65F15 \sep 65H18 \sep 15A51.
\end{keyword}

\end{frontmatter}

\section{Introduction}
%
%
%
%

This article is concerned with numerical approaches for solving the
initial-boundry value problem of the
space-time fractional advection-diffusion equation  (STFDE)~\cite{Liu2007}:

 \begin{equation}\label{FDEs}
 \left\{\begin{array}{l}
 \begin{array}{ll}
\dfrac{\partial^\alpha u(x,t)}{\partial t^\alpha}= & -d_+(x,t)
 D^\beta_{a,x} u(x,t)
 -
  d_-(x,t)
 D^\beta_{x,b} u(x,t) +\\
 & e_+(x,t) D^\gamma_{a,x} u(x,t)
  +e_-(x,t) D^\gamma_{x,b} u(x,t)
  + f(x,t),
  \end{array}
  \\
 u(x,0) = \phi(x),\quad a\leq x \leq b,
  \\
  u(a,t) = u(b,t) =0, \quad 0<t\leq T,
  \end{array}\right.
 \end{equation}
where $\alpha,\beta \in (0,1]$, $\gamma\in(1,2]$, $a<x<b$, and $0<t\leq T$.
Here, the parameters $\alpha, \beta$  and  $\gamma$ are the order of the STFDE,
$f(x,t)$ is the source term, and diffusion coefficient functions
$d_\pm(x,t)$ and $e_\pm(x,t)$ are non-negative under the assumption
that the flow is from left to right. The STFDE can be regarded as
generalizations of classical advection-diffusion equations with the
first-order time derivative replaced by the Caputo fractional derivative
of order $\alpha \in (0,1]$, and  the first-order and the second-order
space derivatives replaced by the two-sided Riemman-Liouville fractional
derivatives of order $\beta \in(0,1]$ and of order  $\gamma\in(1,2]$. Namely,
the time fractional derivative in~\eqref{FDEs}
is the Caputo fractional derivative of order $\alpha$~\cite{Podlubny1999}  denoted by
\begin{equation}
\frac{\partial^\alpha u(x,t)}{\partial t^\alpha} =
\frac{1}{\Gamma(1-\alpha)}
\int_0^t\frac{\partial u(x,\psi)}{\partial \psi} \frac{d\psi}{(t-\psi)^\alpha},
\end{equation}
and the left-handed ($D^\alpha_{a,x}$) and the right-handed ($D^\alpha_{x,b}$) space fractional derivatives in~\eqref{FDEs} are the Riemann-Liouville fractional derivatives of order $\alpha$~\cite{ Podlubny1999, Samko1993} which are defined by
\begin{subequations}\label{eq:RL}
 \begin{eqnarray}
 D^\alpha_{a,x} u(x,t)  &= &
\frac{1}{\Gamma(m-\alpha)}
\frac{\partial^m}{\partial x^m} \int^{x}_a  \frac{u(s,t)}{(x-s)^{\alpha-m+1}} ds,
 \end{eqnarray}
\mbox{and}
\begin{eqnarray}
 D^\alpha_{x,b} u(x,t)  &= &
\frac{(-1)^m}{\Gamma(m-\alpha)}
\frac{\partial^m}{\partial x^m} \int^{b}_x  \frac{u(s,t)}{(s-x)^{\alpha-m+1}} ds,
 \end{eqnarray}
where $\Gamma$ denotes the gamma function, and $m$ is an integer satisfying $m-1 < \alpha \leq m$.
\end{subequations}
Truly,  when $\alpha = \beta = 1$ and $\gamma =2$, the above equation reduces to the classical advection-diffusion equation.

The study of fractional calculus can be traced to late 17th century~\cite{Miller1993,Samko1993,Oldham1974},
but it was not until late 20th century that fractional differential equations (FDEs)
become important due to its wide applications in finance~\cite{Gorenflo2001,Raberto2002,Samko1993,Scalas2000}, physics~\cite{Atanackovic2004,Barkai2000,Carreras2001,Kilbas2006,Meerschaert2001,Meerschaert2002a,Meerschaert2002b},
image processing~\cite{Bai2007}, and even biology~\cite{West2007}.
Though analytic approaches, such as the Fourier transform method,
the Laplace transform methods, and the Mellin transform method,
have been proposed to seek closed-form solutions~\cite{Podlubny1999},
there are very few FDEs whose analytical closed-form solutions are available.
Therefore, the research on numerical approximation and techniques for the
solution of FDEs has attracted intensive interest; see~\cite{Ervin2007,Ford2001,Liu2004,Meerschaert2004,Meerschaert2006a,Meerschaert2006b,Sousa2009,Tadjeran2006,Tadjeran2007,Wang2010} and references therein.
Importantly, traditional methods for solving FDEs tend to generate full coefficient matrices, which incur computational cost of $\mathcal{O}(N^3)$ and storage of $\mathcal{O}(N^2)$ with $N$ being the number of grid points~\cite{Wang2010}.

To optimize the computational complexity, a shifted Gr\"{u}nwald discretization scheme with the property of unconditional stability was proposed by Meerschaet and Tadjeran~\cite{Meerschaert2004,Meerschaert2006a} to approximate the FDE. Later, Wang \emph{et al.}~\cite{Wang2010} discovered that the linear system generated by this discretization has a special Toeplitz-like coefficient matrix, or,	 more precisely,  this coefficient matrix can be expressed as a sum of diagonal-multiply-Toeplitz matrices.  This implies that the storage requirement is $\mathcal{O}(N)$ instead of $\mathcal{O}(N^2)$, and the complexity of the matrix-vector multiplication only requires $\mathcal{O}(N \log N)$ operations by the fast Fourier transform (FFT) \cite{Chan1996, Ng2004, Chan2007}. Upon using this advantage, Wang~\emph{et al.} proposed the CGNR method having computational cost of $\mathcal{O}(N\log^2 N)$ to solve the linear system and numerical experiments show that the CGNR method is fast when the diffusion coefficients are \text{very small}, i.e., the discretized systems are well-conditioned~\cite{Wang2011}.

However, the discretized systems become ill-conditioned when the diffusion coefficients are not small. In this case, the CGNR method converges slowly.
To overcome this shortcoming, preconditioning techniques have been introduced to improve the efficiency of the CG method with the total complexity being $\mathcal{O}(N \log N)$ operations at each time step~\cite{Lei2013,Lin2014}. For the same reason, we propose two preconditioned iterative methods, i.e., the preconditioned GMRES method and the preconditioned CGNR method, and observe results related to the acceleration of the convergence of the iterative methods, while solving~\eqref{FDEs}.

This paper is organized as follows. In section 2, we give an implicit difference method for~\eqref{FDEs} and prove that this scheme is unconditionally stable, convergent and uniquely solvable. In section 3, we propose the preconditioned GMRES method and the preconditioned CGNR method to solve~\eqref{FDEs} by exploring the matrix representation of the implicit difference scheme. Finally, we present numerical experiments to show the efficiency of our numerical approaches in section 4 and provide concluding remarks in section 5.

\section{Implicit difference method}

In this section, we present an implicit difference method for solving~\eqref{FDEs} by discretizing the STFDE defined by~\eqref{FDEs}. Unlike the approach given by Liu~\emph{et al.} in~\cite{Liu2007}, we use henceforth two-sided fractional derivatives to approximate the Riemann-Liouville derivatives in~\eqref{eq:RL}. We want to show that, by two-sided fractional derivatives, this method is also unconditionally stable and convergent.

\subsection{Discretization of the STFDE}

To start with, let $m$ and $n$ be two positive integers, and let  $h = {(b-a)}/{m}$ and $\tau = T/n$ be the sizes of time step and spatial grid, respectively. Then the spatial and temporal partitions can be defined by
\begin{equation*}
x_i =  a + i h, \quad i = 0,1,\ldots,m;\quad
t_j = j\Delta t, \quad j = 0,1,\ldots, n,
\end{equation*}
and for convenience, we shall denote henceforth
\begin{equation*}
\begin{array}{lll}
d^{(j)}_{+,i} = d_+(x_i,t_j),  & d^{(j)}_{-,i} = d_-(x_i,t_j),  &
e^{(j)}_{+,i} = e_+(x_i,t_j),  \\[2mm]
e^{(j)}_{-,i} = e_-(x_i,t_j), & f_i^{(j) } = f(x_i,t_j), &
\Delta_t u(x_i,t_{j }) = u(x_i,t_{j+1})-u(x_i, t_{j}).
\end{array}
\end{equation*}

Upon utilizing the forward difference formula, it is known that
the time fractional derivative for $0 < \alpha < 1$ can be approximated by~\cite{Liu2007},
\begin{eqnarray}
\frac{\partial^\alpha u(x_i,t_{k+1})}{\partial t^\alpha} &=&
\frac{1}{\Gamma(1-\alpha)}
\int_0^{t_{k+1}}\frac{\partial u(x_i,s)}{\partial s} \frac{ds}{(t_{k+1}-s)^\alpha} \nonumber\\
&=&
\frac{1}{\Gamma(1-\alpha)}
\sum_{j=0}^k \left (  \left(\frac{1}{\tau} \Delta_t u(x_i,t_{j}) + \mathcal{O}(\tau) \right)
\int_{t_j}^{t_{j+1}}  (t_{k+1}-s)^{-\alpha} ds\right )\nonumber + \mathcal{O}(\tau^{2-\alpha}) \\
&=&
\frac{\tau^{-\alpha}}{\Gamma(2-\alpha)}
\sum_{j=0}^k a_j \Delta_t u(x_i,t_{k-j}) + \mathcal{O}(\tau^{2-\alpha}), \label{alpha1}
\end{eqnarray}
where  $a_j = (j+1)^{1-\alpha} - j^{1-\alpha}$, $j = 0,1,\ldots, n$.
Also, the Riemann-Liouville derivatives in~\eqref{eq:RL} can be approximated by adopting the Gr$\rm{\ddot{u}}$nwald estimates
and the shifted Gr$\rm{\ddot{u}}$nwald estimates (see~\cite[Remark 2.5]{Meerschaert2004}) for parameters $\beta$ and $\gamma$, respectively, i.e.,
\begin{subequations}\label{eq:ARL}
\begin{eqnarray}
D^{(\beta)}_{a,x} u(x_i,t_{k+1}) &=& \frac{1}{h^\beta} \sum^i_{j=0} g_j^{(\beta)} u(x_{i-j},t_{k+1})
+ \mathcal{O}(h), \label{beta1}\\
D^{(\beta)}_{x,b} u(x_i,t_{k+1}) &=& \frac{1}{h^\beta} \sum^{m-i}_{j=0} g_j^{(\beta)} u(x_{i+j},t_{k+1})+ \mathcal{O}(h), \label{beta2}\\
D^{(\gamma)}_{a,x} u(x_i,t_{k+1}) &=& \frac{1}{h^\gamma} \sum^{i+1}_{j=0} g_j^{(\gamma)} u(x_{i-j+1},t_{k+1})
+ \mathcal{O}(h), \label{gamma1}\\
D^{(\gamma)}_{x,b} u(x_i,t_{k+1}) &=& \frac{1}{h^\gamma} \sum^{i+1}_{j=0} g_j^{(\gamma)} u(x_{i+j-1},t_{k+1})
+ \mathcal{O}(h), \label{gamma2}
\end{eqnarray}
\end{subequations}
where
\begin{eqnarray*}
g_0^{(\beta)}  &=& 1, \quad g_j^{(\beta)} = \frac{(-1)^j}{j!} \beta(\beta-1)\cdots(\beta-j+1), \quad j = 1,2,\ldots,\\
g_0^{(\gamma)}  &=& 1, \quad g_j^{(\gamma)} = \frac{(-1)^j}{j!} \gamma(\gamma-1)\cdots(\gamma-j+1), \quad j = 1,2,\ldots.
\end{eqnarray*}

Let
\begin{equation*}
\omega_1 = \frac{\Gamma(2-\alpha)\tau^\alpha}{h^\beta},\quad
\omega_2 = \frac{\Gamma(2-\alpha)\tau^\alpha}{h^\gamma}, \quad
\omega_3 = \Gamma(2-\alpha)\tau^\alpha,
\end{equation*}
and $u_i^{(j)}$ represent the numerical approximation of $u(x_i,t_j)$.
Using~\eqref{alpha1} and~\eqref{eq:ARL}, we shall see that
the solution of~\eqref{FDEs} can be approximated by the following \emph{implicit difference method}:


\begin{equation}\label{eq:alpha0}
\begin{array}{l}
u_i^{(k+1)}
 + \omega_1 \bigg (d_{+,i}^{(k+1)}  \sum\limits^i_{j=0} g_j^{(\beta)} u_{i-j}^{(k+1)}
 +  d_{-,i}^{(k+1)}  \sum\limits^{m-i}_{j=0} g_j^{(\beta)} u_{i+j}^{(k+1)}\bigg)
 - \omega_2  \bigg (e_{+,i}^{(k+1)} \sum\limits^{i+1}_{j=0} g_j^{(\gamma)} u_{i-j+1}^{(k+1)}
 \\[2mm]
 + e_{-,i}^{(k+1)} \sum\limits^{m-i+1}_{j=0} g_j^{(\gamma)} u_{i+j-1}^{(k+1)}\bigg)
 = u_i^{(k)} - \sum\limits_{j=1}^k a_j \big(u_i^{(k-j+1)} - u_i^{(k-j)}\big)
 + \omega_3 f_i^{(k+1)},
   \end{array}
\end{equation}
where $i = 1,\ldots, m-1$; $k=0,\ldots, n-1$, and the boundary and initial conditions can be discretized as follows:
\begin{equation*}
u_i^{(0)} =   \phi(x_i),  \quad i = 0,\ldots, m;\quad
u_0^{(k)} = u_m^{(k)} = 0, \quad  k = 1,\ldots,n.
 \end{equation*}

\subsection{Analysis of the implicit difference method}

To analyze the stability and convergence of the implicit difference method given above, we first let ${U}_i^{(k)}$ be the approximation solution of $u_i^{(k)}$ in~\eqref{eq:alpha0}, and let  $\xi_i^{(k)} = {U}_i^{(k)} - u_i^{(k)}$,
$i= 1,\ldots, m-1$; $k = 0,\ldots, n-1$, be the error satisfying the equation
\begin{equation}
\begin{array}{l}
\xi_i^{(k+1)}
 + \omega_1 \bigg (d_{+,i}^{(k+1)}  \sum\limits^i_{j=0} g_j^{(\beta)} \xi_{i-j}^{(k+1)}
 +  d_{-,i}^{(k+1)}  \sum\limits^{m-i}_{j=0} g_j^{(\beta)} \xi_{i+j}^{(k+1)}\bigg)
 - \omega_2  \bigg (e_{+,i}^{(k+1)} \sum\limits^{i+1}_{j=0} g_j^{(\gamma)} \xi_{i-j+1}^{(k+1)}
 \\[2mm]
 + e_{-,i}^{(k+1)} \sum\limits^{m-i+1}_{j=0} g_j^{(\gamma)} \xi_{i+j-1}^{(k+1)}\bigg)
 = \xi_i^{(k)} - \sum_{j=1}^{k}a_j\big(\xi_i^{(k-j+1)} -\xi_i^{(k-j)}\big).
 \end{array}
 \end{equation}

Correspondingly, assume $E^{(k+1)} = \big [ \xi_1^{(k)}, \xi_2^{(k)}, \ldots, \xi^{(k)}_{m-1} \big ]^\top$, $k = 0,\ldots,n-1$.
It is obvious upon inspection that the method given by~\eqref{eq:alpha0} is stable, once we can show that
\begin{equation*}
\|E^{(k+1)}\|_\infty \leq { \|E^{(0)}\|_\infty}.
\end{equation*}
To this purpose, the following results given in~\cite{Meerschaert2004,Meerschaert2006a,Wang2010} are required.
\begin{lemma}\label{propertyOfg}
The coefficients $a_j$, $g_j^{(\beta)}$, $g_j^{(\gamma)}$, for $j = 1,2,\ldots,$ satisfy
\begin{enumerate}
 \item 
 $ 1 = a_0 > a_1>a_2>\cdots>a_j\rightarrow 0$,
as $ j\rightarrow \infty$,

 \item $g_0^{(\beta)} = 1$, $g_j^{(\beta)} < 0$, for $j = 1,2,\ldots,$ and
 $\sum_{j=0}^\infty g_j^{(\beta)} = 0$,

 \item $g_1^{(\gamma)} = -\gamma < 0$, $g_j^{(\gamma)} > 0$, for $j \neq 1$, and $\sum_{j=0}^\infty g_j^{(\gamma)} = 0$.

 \item $g_j^{(\beta)} = \mathcal{O}(j^{-(\beta + 1)})$ and $ g_j^{(\gamma)} = \mathcal{O}(j^{-(\gamma + 1)})$.

 \end{enumerate}
  \end{lemma}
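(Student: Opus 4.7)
The plan is to verify the four items by working directly with the closed forms
$a_j = (j+1)^{1-\alpha} - j^{1-\alpha}$ and $g_j^{(\mu)} = (-1)^j \binom{\mu}{j}$, and then invoking the binomial series identity
\[
(1-z)^\mu = \sum_{j=0}^{\infty} g_j^{(\mu)} z^j, \qquad |z|\le 1,\ \mu>0,
\]
to resolve the summability claims.

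For item~1, I would apply the mean value theorem to $\varphi(x)=x^{1-\alpha}$, writing $a_j = (1-\alpha)\xi_j^{-\alpha}$ for some $\xi_j \in (j,j+1)$. Since $\alpha \in (0,1]$, the map $x\mapsto (1-\alpha)x^{-\alpha}$ is strictly decreasing on $(0,\infty)$, which yields $a_0>a_1>\cdots$, and also $a_j \to 0$ as $j\to\infty$. (The degenerate case $\alpha=1$, where all $a_j=0$ for $j\ge 1$, is treated separately.) For items~2 and~3, I would perform a sign analysis of the product $\mu(\mu-1)\cdots(\mu-j+1)$. When $\mu=\beta\in(0,1)$ the factor $\beta$ is positive while $(\beta-k)<0$ for $1\le k\le j-1$, so the product has sign $(-1)^{j-1}$ and the leading $(-1)^j$ in $g_j^{(\beta)}$ forces $g_j^{(\beta)}<0$. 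When $\mu=\gamma\in(1,2)$, the two factors $\gamma,\gamma-1$ are positive while $(\gamma-k)<0$ for $2\le k\le j-1$, yielding sign $(-1)^{j-2}$ in the product and hence $g_j^{(\gamma)}>0$ for $j\ge 2$, with $g_1^{(\gamma)}=-\gamma$. The summation identities then drop out of the binomial series: since $(1-z)^\mu$ is continuous at $z=1$ with value $0$ whenever $\mu>0$, and the series converges there by the decay estimate of item~4, Abel's theorem gives $\sum_{j=0}^\infty g_j^{(\mu)} = 0$.

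The main obstacle is item~4, the asymptotic rate. I would use the reflection identity
\[
(-1)^j \binom{\mu}{j} \;=\; \binom{j-\mu-1}{j} \;=\; \frac{\Gamma(j-\mu)}{\Gamma(-\mu)\,\Gamma(j+1)},
\]
valid since $\mu$ is not a non\-negative integer, and apply Stirling's asymptotic $\Gamma(j-\mu)/\Gamma(j+1) \sim j^{-\mu-1}$ as $j\to\infty$. This gives $g_j^{(\mu)} \sim j^{-(\mu+1)}/\Gamma(-\mu)$, from which the stated $\mathcal O(j^{-(\mu+1)})$ estimate follows for both $\mu=\beta$ and $\mu=\gamma$. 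The boundary cases $\beta=1$ and $\gamma=2$ are handled by noting that $g_j$ vanishes identically there, so the bound is trivial. With this asymptotic in hand, the absolute convergence of $\sum g_j^{(\mu)}$ (needed above for Abel's theorem) is guaranteed since $\mu+1>1$, closing the loop on items~2 and~3.
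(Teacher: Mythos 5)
Your proof is correct, but it is worth noting that the paper does not prove this lemma at all: it simply quotes the properties from the references \cite{Meerschaert2004,Meerschaert2006a,Wang2010}, so there is no in-paper argument to compare against. Your self-contained derivation is sound: the mean value theorem applied to $x\mapsto x^{1-\alpha}$ gives the monotone decay of $a_j$; the sign analysis of the factors in $g_j^{(\mu)}=(-1)^j\binom{\mu}{j}$ correctly yields $g_j^{(\beta)}<0$ for $\beta\in(0,1)$ and $g_j^{(\gamma)}>0$, $j\neq 1$, for $\gamma\in(1,2)$; the reflection identity $(-1)^j\binom{\mu}{j}=\Gamma(j-\mu)/\bigl(\Gamma(-\mu)\Gamma(j+1)\bigr)$ together with the Gamma-ratio asymptotic gives the sharp rate $g_j^{(\mu)}\sim j^{-(\mu+1)}/\Gamma(-\mu)$, which is stronger than the stated $\mathcal{O}$-bound; and using this decay to get absolute convergence at $z=1$ and then Abel's theorem on $(1-z)^\mu$ legitimately yields $\sum_{j=0}^\infty g_j^{(\mu)}=0$ without circularity, since item~4 is independent of items~2--3. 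The only caveat is the boundary values: you flag $\alpha=1$ and the vanishing of $g_j$ at $\beta=1$, $\gamma=2$ for item~4, but the strict sign statements of items~1--3 likewise fail there (e.g.\ $g_j^{(1)}=0$ for $j\ge 2$), a point the lemma as stated in the paper also glosses over; a one-line remark restricting the strict inequalities to the open parameter ranges would make your argument fully airtight.
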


We do want to note that Lemma~\ref{propertyOfg} implies that
\begin{equation*}
\sum\limits_{j=0}^k g_j^{(\beta)} > 0 \mbox{ and }
 \sum\limits_{j=0}^{k+1} g_j^{(\gamma)} < 0, \quad \mbox{ for } k = 0,1, \ldots
\end{equation*}
This observation also gives rise to the certification of the stability of the method given by~\eqref{eq:alpha0}.
\begin{theorem}\label{thm:stability}
The implicit difference method~\eqref{eq:alpha0} for time-space fractional diffusion equation is unconditionally stable, that is,
\begin{equation}
\|E^{(k+1)}\|_\infty \leq { \|E^{(0)}\|_\infty},\quad 0\leq k\leq n-1.
\end{equation}

\end{theorem}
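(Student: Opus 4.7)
The proof naturally proceeds by induction on $k$, together with a discrete maximum-principle-style argument: pick the spatial index $i^\ast$ at which $|\xi_{i^\ast}^{(k+1)}| = \|E^{(k+1)}\|_\infty$, isolate $\xi_{i^\ast}^{(k+1)}$ in the error equation, and show the resulting inequality forces $\|E^{(k+1)}\|_\infty \le \|E^{(0)}\|_\infty$. The base case $k=0$ will be automatic from the same algebra since the time-history sum is empty; the inductive step assumes $\|E^{(j)}\|_\infty \le \|E^{(0)}\|_\infty$ for all $j \le k$.

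The key algebraic observation is that when the error equation is rearranged to solve for $\xi_{i^\ast}^{(k+1)}$, the coefficient multiplying it turns out to be
\[
C := 1 + \omega_1\bigl(d_{+,i^\ast}^{(k+1)} + d_{-,i^\ast}^{(k+1)}\bigr) + \omega_2\gamma\bigl(e_{+,i^\ast}^{(k+1)} + e_{-,i^\ast}^{(k+1)}\bigr),
\]
using $g_0^{(\beta)} = 1$ and $g_1^{(\gamma)} = -\gamma$. Moving every other $\xi_j^{(k+1)}$ term to the right-hand side, Lemma~\ref{propertyOfg}(2)--(3) together with $d_\pm, e_\pm \ge 0$ guarantees that each resulting coefficient is non-negative, and the observation $\sum_{j=0}^{i^\ast} g_j^{(\beta)} > 0$, $\sum_{j=0}^{i^\ast+1} g_j^{(\gamma)} < 0$ gives that their sum is strictly less than $C - 1$. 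Taking absolute values at $i = i^\ast$ and using $|\xi_{i^\ast}^{(k+1)}| = \|E^{(k+1)}\|_\infty$ therefore collapses the spatial contributions and leaves
\[
|\xi_{i^\ast}^{(k+1)}| \le \Bigl| \xi_{i^\ast}^{(k)} - \sum_{j=1}^k a_j\bigl(\xi_{i^\ast}^{(k-j+1)} - \xi_{i^\ast}^{(k-j)}\bigr) \Bigr|.
\]

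It then remains to bound this by $\|E^{(0)}\|_\infty$. A standard index shift rewrites the right-hand side as $(1-a_1)\,\xi_{i^\ast}^{(k)} + \sum_{j=2}^k (a_{j-1} - a_j)\,\xi_{i^\ast}^{(k-j+1)} + a_k\,\xi_{i^\ast}^{(0)}$; Lemma~\ref{propertyOfg}(1) ensures all three types of coefficients are non-negative, and they telescope to $1$, so the right-hand side is a convex combination of $\xi_{i^\ast}^{(0)},\ldots,\xi_{i^\ast}^{(k)}$. Applying the inductive hypothesis termwise yields the bound $\|E^{(0)}\|_\infty$ and closes the induction. The main obstacle I anticipate is the bookkeeping of the four interlocking Gr\"unwald convolutions carefully enough to confirm both the exact value of $C$ and the sharp inequality that the sum of the off-diagonal coefficients stays below $C - 1$; the non-negativity of $d_\pm, e_\pm$ and the precise sign pattern recorded in Lemma~\ref{propertyOfg} are exactly what is needed to keep the diagonal and the collected off-diagonal terms pointing in the correct directions.
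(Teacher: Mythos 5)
Your proposal is correct and follows essentially the same route as the paper: an induction on $k$ with a discrete maximum principle at the maximizing index, using the sign pattern and partial-sum properties of $g_j^{(\beta)},g_j^{(\gamma)}$ from Lemma~\ref{propertyOfg}, followed by the telescoping convex-combination bound $\sum_{j=1}^{k}(a_{j-1}-a_j)+a_k=1$. Your isolation of the diagonal coefficient $C=1+\omega_1(d_{+}+d_{-})+\omega_2\gamma(e_{+}+e_{-})$ and the bound on the off-diagonal row sum by $C-1$ is just a rearranged form of the paper's chain of inequalities (and, as the paper itself remarks, works for variable non-negative coefficients without the constant-coefficient simplification).
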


\begin{proof}
First, without loss of generality, we may assume that the diffusion coefficient functions $d_+(x,t) = d_+$, $d_-(x,t) = d_-$, $e_+(x,t) = e_+$ and $e_-(x,t) = e_-$ are constants in our proof. Suppose that $k=0$, and let $\xi_\ell^{(1)} = \|E^{(1)}\|_\infty:= \max\limits_{1\leq i\leq m-1} |\xi_i^{(1)}|$. Then
{\small
\begin{eqnarray*}
|\xi_\ell^{(1)}| &\leq&
\bigg[1
 + \omega_1 \Big (d_{+,\ell}^{(k+1)}  \sum\limits^\ell_{j=0} g_j^{(\beta)}
 +  d_{-,\ell}^{(k+1)}  \sum\limits^{m-\ell}_{j=0} g_j^{(\beta)} \Big)
-
 \omega_2  \Big (e_{+,\ell}^{(k+1)} \sum\limits^{\ell+1}_{j=0} g_j^{(\gamma)}
+ e_{-,\ell}^{(k+1)} \sum\limits^{m-\ell+1}_{j=0} g_j^{(\gamma)} \Big) \bigg] |\xi_\ell^{(1)}|\nonumber\\
&\leq&
|\xi_\ell^{(1)}| + \omega_1
\Big(
d_{+,\ell}^{(1)}  \sum^\ell_{j=0} g_j^{(\beta)} |\xi_{\ell-j}^{(1)}|
+
d_{-,\ell}^{(1)}  \sum^{m-\ell}_{j=0} g_j^{(\beta)} |\xi_{\ell+j}^{(1)} |
\Big)
-
 \omega_2 \Big(e_{+,\ell}^{(1)} \sum^{\ell+1}_{j=0} g_j^{(\gamma)}  |\xi_{\ell-j+1}^{(1)}|
 +
 e_{-,\ell}^{(1)} \sum^{m-\ell+1}_{j=0} g_j^{(\gamma)}  |\xi_{\ell+j-1}^{(1)}|
 \Big)
\nonumber\\
 & \leq &
 \bigg |\xi_\ell^{(1)} +\omega_1
\Big(
d_{+,\ell}^{(1)}  \sum^\ell_{j=0} g_j^{(\beta)} \xi_{\ell-j}^{(1)}
+
d_{-,\ell}^{(1)}  \sum^{m-\ell}_{j=0} g_j^{(\beta)} \xi_{\ell+j}^{(1)}
\Big)
-
 \omega_2 \Big(e_{+,\ell}^{(1)} \sum^{\ell+1}_{j=0} g_j^{(\gamma)}  \xi_{\ell-j+1}^{(1)}
 +
 e_{-,\ell}^{(1)} \sum^{m-\ell+1}_{j=0} g_j^{(\gamma)}  \xi_{\ell+j-1}^{(1)}
 \Big)
 \bigg |\nonumber\\
&=&
| \xi_\ell^{(0)} |
\leq \|E^{(0)}\|_\infty.
\end{eqnarray*}
}

Here, the second and third inequalities are true due to the fact given in Lemma~\ref{propertyOfg} and the triangle inequality on absolute value. Now suppose that for some integer $k\geq 0$, the result is established, i.e.,
\begin{equation*}
\|E^{(j)}\|_\infty \leq \|E^{(0)}\|_\infty, \quad \mbox{ for } j\leq k.
\end{equation*}
As we did earlier for $k = 0$, let $\xi_\ell^{(k+1)} = \max\limits_{1\leq i\leq m-1} |\xi_i^{(k+1)}|$.
By Lemma~\ref{propertyOfg}, it can be seen that

\begin{eqnarray*}
|\xi_\ell^{(k+1)}|
 & \leq &
 \bigg |\xi_\ell^{(k+1)} +\omega_1
\Big(
d_{+,\ell}^{(k+1)}  \sum^\ell_{j=0} g_j^{(\beta)} \xi_{\ell-j}^{(k+1)}
+
d_{-,\ell}^{(k+1)}  \sum^{m-\ell}_{j=0} g_j^{(\beta)} \xi_{\ell+j}^{(k+1)}
\Big)
\\
&&
-
 \omega_2 \Big(e_{+,\ell}^{(k+1)} \sum^{\ell+1}_{j=0} g_j^{(\gamma)}  \xi_{\ell-j+1}^{(k+1)} +
 e_{-,\ell}^{(k+1)} \sum^{m-\ell+1}_{j=0} g_j^{(\gamma)}  \xi_{\ell+j-1}^{(k+1)}
 \Big)
 \bigg |\nonumber\\
&=&
\bigg | \xi_\ell^{k}
 -\sum_{j=1}^k a_j
 \big(\xi_\ell^{k-j+1} - \xi_\ell^{k-j}\big) \bigg |
 =
\bigg | \sum_{j=1}^k (a_{j-1}-a_j)\xi_\ell^{(k-j+1)} +
a_k\xi_\ell^{(0)}   \bigg |
\\
&\leq&
\| E^{(0)}  \|_\infty.
\end{eqnarray*}

Truly, the preceding result, which follows from the assumption
that the coefficient functions are constant, does not provide complete results.
In fact, it can be seen that the above proof requires only the properties
of non-negatives of the coefficient functions. Thus, the result for non-constant ones can be proved similarly.

\end{proof}

Our next theorem is to analyze the convergence of the implicit method given in~\eqref{eq:alpha0}. To this end,
recall that $u(x_i,t_j)$, $i=1,\ldots,n-1;\,j=0,\ldots,n-1$, denotes the exact solution of~\eqref{FDEs} at mesh point $(x_i,t_j)$ and $u_i^{(j)}$, $i=1,\ldots,n-1;\,j=0,\ldots,n-1$,  represents the solution of~\eqref{eq:alpha0}.
Let us assume that $\psi_i^{(k)} = u(x_i,t_{k}) - u_i^{(k)} $ and $\Psi^{(k)} = (\psi_1^{(k)},\psi_2^{(k)},\ldots,\psi_{m-1}^{(k)})^\top$.
Note that, by construction, ${\Psi}^{(0)} = \mathbf{0}$, since $u_i^{(0)} = \psi(x_i)=u(x_i,0)$, $i = 1,\ldots,m-1$.

Using this notation, we consider
 \begin{equation}\label{eq:COV}
 \left\{
 \begin{array}{l}
\psi_i^{(1)}
 + \omega_1 \bigg (d_{+,i}^{(1)}  \sum\limits^i_{j=0} g_j^{(\beta)} \psi_{i-j}^{(1)}
 +  d_{-,i}^{(1)}  \sum\limits^{m-i}_{j=0} g_j^{(\beta)} \psi_{i+j}^{(1)}\bigg)
 -  \omega_2  \bigg (e_{+,i}^{(1)} \sum\limits^{i+1}_{j=0} g_j^{(\gamma)} \psi_{i-j+1}^{(1)}
 \\[2mm]
 + e_{-,i}^{(1)} \sum\limits^{m-i+1}_{j=0} g_j^{(\gamma)} \psi_{i+j-1}^{(1)}\bigg)
 = R_i^{(1)},
 \\[2mm]
\psi_i^{(k+1)}
 + \omega_1 \bigg (d_{+,i}^{(k+1)}  \sum\limits^i_{j=0} g_j^{(\beta)} \psi_{i-j}^{(k+1)}
 +  d_{-,i}^{(k+1)}  \sum\limits^{m-i}_{j=0} g_j^{(\beta)} \psi_{i+j}^{(k+1)}\bigg)
-
 \omega_2  \bigg (e_{+,i}^{(k+1)} \sum\limits^{i+1}_{j=0} g_j^{(\gamma)}
 \\[2mm]  \psi_{i-j+1}^{(k+1)}+
 e_{-,i}^{(k+1)} \sum\limits^{m-i+1}_{j=0} g_j^{(\gamma)} \psi_{i+j-1}^{(k+1)}\bigg)
 = \psi_i^{(k)} -  \sum_{j=1}^{k}a_j \big(\psi_i^{(k-j+1)} -\psi_i^{(k-j)}\big)+
 R_i^{(k+1)},\\ 
1\leq i\leq m-1,  \, 1\leq k\leq n-1.
 \end{array}
 \right.
\end{equation}

In this way, we can observe from~\eqref{alpha1} and~\eqref{eq:ARL} that
\begin{equation}\label{rbound}
 R_i^{(k+1)} = \mathcal{O}\big((\tau^{2} + \tau^{\alpha}h)\big),
 \quad
1\leq i\leq m-1;  \, 0\leq k\leq n-1. \\
\end{equation}
Thus, a way to do the convergence analysis is sufficed to come up with an upper bound of $\|\Psi^{(k+1)}\|_\infty$, $k = 0,1,\ldots,n-1$, as follows.
\begin{theorem}
\begin{equation}\label{eq:01}
\|\Psi^{(k+1)}\|_\infty \leq  {C}a_k^{-1}(\tau^{2} + \tau^{\alpha}h),\quad k = 0, \ldots, n-1,
\end{equation}
for some constant ${C}$.

\end{theorem}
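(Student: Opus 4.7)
The plan is to proceed by induction on $k$, mirroring the max-entry technique already exploited in the stability proof of Theorem~\ref{thm:stability}. By~\eqref{rbound} there is a constant $C_0$ with $|R_i^{(k+1)}|\le C_0(\tau^2+\tau^\alpha h)=:\bar R$ for all admissible $i,k$; I will establish the slightly stronger bound $\|\Psi^{(k+1)}\|_\infty\le a_k^{-1}\bar R$ for $0\le k\le n-1$, which yields the theorem with $C=C_0$.

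For the base case $k=0$, let $\ell$ realize $|\psi_\ell^{(1)}|=\|\Psi^{(1)}\|_\infty$ and apply the same three-step estimate used at the beginning of Theorem~\ref{thm:stability} to the first line of~\eqref{eq:COV}: augment $|\psi_\ell^{(1)}|$ by the bracketed coefficient (bounded above by $1$), push absolute values inside the spatial sums via the triangle inequality, then collapse those sums using the sign data for $g_j^{(\beta)}$ and $g_j^{(\gamma)}$ from Lemma~\ref{propertyOfg} together with the non-negativity of $d_\pm$ and $e_\pm$. This yields $|\psi_\ell^{(1)}|\le |R_\ell^{(1)}|\le\bar R=a_0^{-1}\bar R$.

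For the inductive step, assume $\|\Psi^{(j+1)}\|_\infty\le a_j^{-1}\bar R$ for $j=0,\ldots,k-1$, and apply the same max-entry trick to the second line of~\eqref{eq:COV}. This gives
\begin{equation*}
|\psi_\ell^{(k+1)}|\le \Bigl|\psi_\ell^{(k)}-\sum_{j=1}^{k}a_j\bigl(\psi_\ell^{(k-j+1)}-\psi_\ell^{(k-j)}\bigr)\Bigr|+|R_\ell^{(k+1)}|.
\end{equation*}
Using the telescoping identity $\psi_\ell^{(k)}-\sum_{j=1}^k a_j(\psi_\ell^{(k-j+1)}-\psi_\ell^{(k-j)})=\sum_{j=1}^{k}(a_{j-1}-a_j)\psi_\ell^{(k-j+1)}+a_k\psi_\ell^{(0)}$ (exactly as in the stability proof), together with $\Psi^{(0)}=\mathbf{0}$ and the positivity $a_{j-1}-a_j>0$ of Lemma~\ref{propertyOfg}, the right-hand side is dominated by
\begin{equation*}
\sum_{j=1}^{k}(a_{j-1}-a_j)\|\Psi^{(k-j+1)}\|_\infty+\bar R.
\end{equation*}
The inductive hypothesis together with the monotonicity $a_{k-j}^{-1}\le a_k^{-1}$ (an immediate consequence of the decrease of $a_j$ in Lemma~\ref{propertyOfg}) then bounds this by $a_k^{-1}\bar R\sum_{j=1}^{k}(a_{j-1}-a_j)+\bar R=a_k^{-1}(1-a_k)\bar R+\bar R=a_k^{-1}\bar R$, closing the induction.

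The main obstacle is spotting the correct induction ansatz: a naive version $\|\Psi^{(k+1)}\|_\infty\le C\bar R$ fails because the telescoping sum only contracts by a factor $1-a_k$, leaving the residual $\bar R$ unabsorbed. Weighting the ansatz by $a_k^{-1}$ is exactly what makes the arithmetic identity $a_k^{-1}(1-a_k)+1=a_k^{-1}$ hold, so the residual term is swallowed precisely. The remaining work—extending the max-entry inequality to non-constant non-negative coefficients and verifying the telescoping rearrangement—is a direct reuse of the machinery already developed for Theorem~\ref{thm:stability}.
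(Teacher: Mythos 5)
Your argument is correct and is essentially the paper's own proof: the same induction with the weight $a_k^{-1}$, the same max-entry estimate borrowed from Theorem~\ref{thm:stability}, and the same telescoping identity $\sum_{j=1}^{k}(a_{j-1}-a_j)+a_k=1$; if anything, your formulation of the induction hypothesis (with $a_j^{-1}$ at each earlier level, then monotonicity of the $a_j$) is cleaner than the paper's. One wording slip: the bracketed coefficient in the base case is bounded \emph{below} by $1$ (since the partial sums of $g_j^{(\beta)}$ are positive and those of $g_j^{(\gamma)}$ are negative by Lemma~\ref{propertyOfg}), not above, and that lower bound is exactly what justifies inserting it in the first inequality.
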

\begin{proof}
Corresponding to~\eqref{rbound}, we shall assume for convenience that there is a positive constant $C$ such that
\begin{equation*}
 |R_i^{(k+1)}| \leq {C} (\tau^{2} + \tau^{\alpha}h),
 \quad 1\leq i\leq m-1;  \, 0\leq k\leq n-1.
\end{equation*}
Then, the poof is by mathematical induction on $k$.
Let $|\psi_\ell^1|= \|\Psi^1\|_\infty := \max\limits_{1\leq i\leq m-1} |\psi_i^1|$. Observe from~\eqref{eq:COV} that if $k=0$,  then we have
{\small
\begin{eqnarray*}
|\psi_\ell^{(1)}| &\leq&
\bigg[1
 + \omega_1 \Big (d_{+,\ell}^{(k+1)}  \sum\limits^\ell_{j=0} g_j^{(\beta)}
 +  d_{-,\ell}^{(k+1)}  \sum\limits^{m-\ell}_{j=0} g_j^{(\beta)} \Big)
-
 \omega_2  \Big (e_{+,\ell}^{(k+1)} \sum\limits^{\ell+1}_{j=0} g_j^{(\gamma)}
+ e_{-,\ell}^{(k+1)} \sum\limits^{m-\ell+1}_{j=0} g_j^{(\gamma)} \Big) \bigg] |\psi_\ell^{(1)}|\nonumber\\
&\leq&
|\psi_\ell^{(1)}| +\omega_1
\Big(
d_{+,\ell}^{(1)}  \sum^\ell_{j=0} g_j^{(\beta)} |\psi_{\ell-j}^{(1)}|
+
d_{-,\ell}^{(1)}  \sum^{m-\ell}_{j=0} g_j^{(\beta)} |\psi_{\ell+j}^{(1)} |
\Big)
-
 \omega_2 \Big(e_{+,\ell}^{(1)} \sum^{\ell+1}_{j=0} g_j^{(\gamma)}  |\psi_{\ell-j+1}^{(1)}|
 +
 e_{-,\ell}^{(1)} \sum^{m-\ell+1}_{j=0} g_j^{(\gamma)}  |\psi_{\ell+j-1}^{(1)}|
 \Big)
\nonumber\\
 & \leq &
 \bigg |\psi_\ell^{(1)} +\omega_1
\Big(d_{+,\ell}^{(1)}  \sum^\ell_{j=0} g_j^{(\beta)} \psi_{\ell-j}^{(1)}
+ d_{-,\ell}^{(1)}  \sum^{m-\ell}_{j=0} g_j^{(\beta)} \psi_{\ell+j}^{(1)}\Big)
-\omega_2 \Big(e_{+,\ell}^{(1)} \sum^{\ell+1}_{j=0} g_j^{(\gamma)}  \psi_{\ell-j+1}^{(1)}
 + e_{-,\ell}^{(1)} \sum^{m-\ell+1}_{j=0} g_j^{(\gamma)}  \psi_{\ell+j-1}^{(1)} \Big)
 \bigg |\nonumber\\
&=&
 |R_\ell^{(1)}|\leq {C} a_0^{-1}(\tau^{2} + \tau^{\alpha}h),
\end{eqnarray*}
}
namely,
 \begin{equation*}
\|\Psi^1\|_\infty \leq {C} a_0^{-1} (\tau^{2} + \tau^{\alpha}h),
\end{equation*}

Suppose that  the result is valid for some integer $k\geq 0$, i.e.,
\begin{equation}
\|\Psi^j\|_\infty \leq {C} a_{k-1}^{-1} (\tau^{2} + \tau^{\alpha}h),\quad j = 1, \ldots, k-1.
\end{equation}

Let $|\psi_\ell^{k+1}|  =\|\Psi^{k+1}\|_\infty:= \max\limits_{1\leq i\leq m-1} |\psi_i^{k+1}|$. It follows that
\begin{eqnarray*}
|\psi_\ell^{k+1}|
 & \leq &
 \bigg |\psi_\ell^{(k+1)} +\omega_1
\Big(
d_{+,\ell}^{(k+1)}  \sum^\ell_{j=0} g_j^{(\beta)} \psi_{\ell-j}^{(k+1)}
+
d_{-,\ell}^{(k+1)}  \sum^{m-\ell}_{j=0} g_j^{(\beta)} \psi_{\ell+j}^{(k+1)}
\Big)
\\
&&
-
 \omega_2 \Big(e_{+,\ell}^{(k+1)} \sum^{\ell+1}_{j=0} g_j^{(\gamma)}  \psi_{\ell-j+1}^{(k+1)} +
 e_{-,\ell}^{(k+1)} \sum^{m-\ell+1}_{j=0} g_j^{(\gamma)}  \psi_{\ell+j-1}^{(k+1)}
 \Big)
 \bigg |\nonumber\\
&=&
\bigg | \psi_\ell^{k}
 -\sum_{j=1}^k a_j\big (\psi_\ell^{k-j+1} - \psi_\ell^{k-j}\big) \bigg |
 =
\bigg | \sum_{j=1}^k (a_{j-1}-a_j)\psi_\ell^{(k-j+1)} +
a_k\psi_\ell^{(0)} +  R_\ell^{(k+1)}   \bigg |
 \\
 &\leq&
  \sum_{j=1}^k (a_{j-1}-a_j) \Big| \psi_\ell^{(k-j+1)} \Big| +  \Big|R_\ell^{(k+1)} \Big|   \\
 &\leq&
  {C} \bigg( a_k +  \sum_{j=1}^k (a_{j-1}-a_j)  \bigg)
a_{k}^{-1}(\tau^{2} + \tau^{\alpha}h)
\leq
 {C} a_k^{-1}(\tau^{2} + \tau^{\alpha}h),\nonumber
 \end{eqnarray*}
since $a_{j-1}- a_j > 0$, $j = 1,\ldots,k$, and $\psi_\ell^{(0)} = 0$.
\end{proof}

It has been shown in~\cite{Liu2007} that
\begin{equation}\label{eq:11}
\lim_{k\to \infty}\frac{a_k^{-1}}{k^{\alpha}}
= \frac{1}{1-\alpha}.
\end{equation}
By (\ref{eq:01}) and (\ref{eq:11}), we immediately have the following result, which demonstrates the convergence of our implicit method.
%
\begin{corollary}
Let $u_i^{(k)}$, $i=1,\ldots,m-1;\,k=1,\ldots,n$ be the numerical solution computed by the implicit difference method \eqref{eq:alpha0}.
Then, there exists a constant $C$ such that
\begin{equation}
| u(x_i,t_{k}) - u_i^{(k)} | \leq  C(\tau^{2-\alpha} + h),\quad i=1,\ldots, m-1;\,k=1,\ldots,n.
\end{equation}
\end{corollary}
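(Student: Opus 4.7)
The plan is to combine the two results already in hand: the bound from the theorem just proved,
\[
\|\Psi^{(k+1)}\|_\infty \leq C\, a_k^{-1}(\tau^{2} + \tau^{\alpha}h), \quad k = 0, \ldots, n-1,
\]
together with the asymptotic behaviour
\[
\lim_{k\to\infty} \frac{a_k^{-1}}{k^{\alpha}} = \frac{1}{1-\alpha}.
\]
The corollary follows once we can convert the factor $a_k^{-1}$ into a power of $\tau$ that absorbs the $\tau^{-\alpha}$ we need.

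First, I would note that $\{a_k^{-1}/k^{\alpha}\}$ is a convergent positive sequence, hence bounded; combined with the fact that $a_0 = 1$ and $a_j > 0$ for every $j$, this gives a uniform constant $C_1 > 0$ with $a_k^{-1} \leq C_1 (k+1)^{\alpha}$ for all $k = 0,1,\ldots,n-1$. (The shift by $1$ is only to cover $k=0$ cleanly; any equivalent handling works.)

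Next, since $k \tau \leq n\tau = T$, we have $(k+1)^{\alpha} \leq (n)^{\alpha}\cdot c \leq c\, T^{\alpha} \tau^{-\alpha}$ for a suitable constant $c$ depending only on $T$. Substituting this into the theorem's bound gives
\[
\|\Psi^{(k+1)}\|_\infty \leq C\, C_1 c\, T^{\alpha}\tau^{-\alpha}\,(\tau^{2} + \tau^{\alpha}h) = C_2\,(\tau^{2-\alpha} + h),
\]
with $C_2$ independent of $h$, $\tau$, $i$ and $k$. Since $|u(x_i,t_k) - u_i^{(k)}| = |\psi_i^{(k)}| \leq \|\Psi^{(k)}\|_\infty$ for every $i$ and every $k \geq 1$, the desired inequality follows with the constant $C := C_2$.

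There is no real obstacle here; the only mild point to be careful about is asserting a uniform bound on $a_k^{-1}/(k+1)^{\alpha}$ for all $k$ in the admissible range from a statement that is only about the limit as $k\to\infty$. This is immediate from boundedness of convergent sequences plus positivity of each $a_k^{-1}$, but it is the one line of the proof that deserves an explicit remark rather than being skipped.
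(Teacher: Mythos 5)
Your proposal is correct and follows the same route the paper intends: the paper deduces the corollary directly from the bound $\|\Psi^{(k+1)}\|_\infty \leq C a_k^{-1}(\tau^{2}+\tau^{\alpha}h)$ and the limit $a_k^{-1}/k^{\alpha}\to 1/(1-\alpha)$, exactly as you do, merely omitting the details you spell out (boundedness of the convergent sequence $a_k^{-1}/k^{\alpha}$ and the use of $k\tau\leq T$, i.e.\ $k^{\alpha}\tau^{\alpha}\leq T^{\alpha}$, to convert $a_k^{-1}\tau^{\alpha}$ into a constant). Your explicit remark about upgrading the limit statement to a uniform bound is a reasonable way to fill in what the paper treats as immediate.
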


We remark that the above approach used to analyze the stability and convergence is simply a follow-up used by Liu~\emph{et al.} in \cite{Liu2007}. Our focus in this work is to apply the efficient CGNR method and GMRES method to solve the linear system arised from~\eqref{eq:alpha0} in terms of suitably constructed preconditioners.

%
%
%
%
%
%
%

\section{Preconditioned iterative methods}

%

Before moving into the investigation of preconditioning techniques, the matrix representation of~\eqref{eq:alpha0} should be elaborated first.  To facilitate our discussion, we use $I_{m-1}$ to denote the identity matrix of order $m-1$. For $1\leq j \leq  n-1$, let
\begin{equation*}
\begin{array}{ll}
\mathbf{u}^{(j)} = [u_1^{(j)},u_2^{(j)},\ldots,u_{m-1}^{j}]^\top, &
\mathbf{f}^{(j)} = [f_1^{(j)}, f_2^{(j)},\ldots,f_{m-1}^{(j)}]^\top,\\[2mm]
 D_+^{(j)} = {\rm diag}(d_{+,1}^{(j)},...,d_{+,m-1}^{(j)}),&
  D_-^{(j)} = {\rm diag}(d_{-,1}^{(j)},...,d_{-,m-1}^{(j)}),\\[2mm]
E_+^{(j)} = {\rm diag}(e_{+,1}^{(j)},...,e_{+,m-1}^{(j)}),&
 E_-^{(j)} = {\rm diag}(e_{-,1}^{(j)},...,e_{-,m-1}^{(j)}),
\end{array}
\end{equation*}
and
$
\mathbf{u}^{(0)} = (\phi_1^{(0)},\phi_2^{(0)},\cdots,\phi_{m-1}^{(0)})^\top.
$
Let $G_\beta$ and $G_\gamma$ be two Toeplitz matrices defined by
\begin{equation*}
G_\beta = \left[\begin{array}{ccccc}
g_0^{(\beta)} & 0 & \cdots & \cdots & 0 \\
g_1^{(\beta)} & g_0^{(\beta)} & 0 & \cdots & 0 \\
\vdots & g_1^{(\beta)} & g_0^{(\beta)} & \ddots & \vdots \\
\vdots & \ddots & \ddots & \ddots & 0  \\
g_{m-2}^{(\beta)} & \ddots& \ddots & \ddots &g_{0}^{(\beta)}  \\
\end{array}\right],
\quad
G_\gamma = \left[\begin{array}{cccccc}
g_1^{(\gamma)} & g_0^{(\gamma)} & 0 & \cdots & 0 & 0 \\
g_2^{(\gamma)} & g_1^{(\gamma)} & g_0^{(\gamma)} & 0 & \cdots & 0 \\
\vdots & g_2^{(\gamma)} & g_1^{(\gamma)} & \ddots & \ddots & \vdots \\
\vdots & \ddots & \ddots & \ddots & \ddots & 0 \\
g_{m-2}^{(\gamma)} & \ddots& \ddots & \ddots &g_{1}^{(\gamma)} & g_{0}^{(\gamma)} \\
g_{m-1}^{(\gamma)} & g_{m-2}^{(\gamma)} & \cdots & \cdots & g_{2}^{(\gamma)} & g_{1}^{(\gamma)}
\end{array}\right].
\end{equation*}

Upon substitution, we see that~\eqref{eq:alpha0} is equivalent to a matrix equation of the form
\begin{eqnarray}\label{eq:FD2}
(I_{m-1} +  A^{(k+1)}) \mathbf{u}^{(k+1)} = \mathbf{b}^{(k+1)},
\end{eqnarray}
where
\begin{eqnarray*}\label{eq:Ab}
\mathbf{b}^{(k+1)} &=&  \sum_{j=1}^{k} (a_{k-j}-a_{k-j+1})\mathbf{u}^{(j)}+
a_k \mathbf{u}^{(0)} +
 \omega_3 \mathbf{f}^{k+1}
 \end{eqnarray*}
 and
 \begin{eqnarray}\label{eq:Ak}
A^{(k+1)} &=& \omega_1 (D_+^{(k+1)} G_\beta + D_-^{(k+1)} G_\beta^\top) -  \omega_2
(E_+^{(k+1)} G_\gamma + E_-^{(k+1)} G_\gamma^\top). \label{eq:Ab2}
\end{eqnarray}

Now we can define the corresponding matrix equation of~\eqref{eq:alpha0}. An intuitive question
to ask is whether the matrix equation is uniquely solvable. Before answering this,
we make an interesting observation of the following result.

\begin{theorem}\label{thm:Mmatrix}
The matrix $I_{m-1} + A^{(k+1)}$ in~\eqref{eq:FD2} is a nonsingular, strictly diagonally dominant $M$-matrix.
\end{theorem}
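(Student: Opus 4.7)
The plan is to establish the three ingredients that characterize $I_{m-1}+A^{(k+1)}$ as a nonsingular, strictly diagonally dominant $M$-matrix: (i) every diagonal entry is positive, (ii) every off-diagonal entry is non-positive (so the matrix is a $Z$-matrix), and (iii) each row sum is strictly positive. Since the off-diagonals are $\leq 0$, (iii) is equivalent to strict diagonal dominance (in the row sense), and a strictly diagonally dominant $Z$-matrix with positive diagonal is automatically a nonsingular $M$-matrix. So the work reduces to an entrywise and row-sum inspection, using only the sign information in Lemma~\ref{propertyOfg} and the non-negativity of the coefficients $d_\pm, e_\pm$.

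The first step is to read off the entries of $A^{(k+1)}$ from~\eqref{eq:Ak}. The diagonal of $D_\pm^{(k+1)}G_\beta$ and $D_-^{(k+1)}G_\beta^\top$ contributes $g_0^{(\beta)}=1$, while the diagonal of $E_\pm^{(k+1)}G_\gamma$ and $E_-^{(k+1)}G_\gamma^\top$ contributes $g_1^{(\gamma)}=-\gamma$. Because of the minus sign in front of the $\omega_2$ block, the $(i,i)$ entry of $I_{m-1}+A^{(k+1)}$ equals
\[
1+\omega_1\bigl(d_{+,i}^{(k+1)}+d_{-,i}^{(k+1)}\bigr)+\omega_2\gamma\bigl(e_{+,i}^{(k+1)}+e_{-,i}^{(k+1)}\bigr)>0.
\]
For the off-diagonal entries, the strict subdiagonal entries of $G_\beta$ are $g_j^{(\beta)}<0$ for $j\geq 1$, the superdiagonal entries of $G_\beta^\top$ are the same numbers, and the only positive-superdiagonal entry of $G_\gamma$ (namely $g_0^{(\gamma)}=1$) as well as its subdiagonals $g_j^{(\gamma)}>0$ for $j\geq 2$ enter with the negative prefactor $-\omega_2$. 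So every off-diagonal contribution of $A^{(k+1)}$ is $\leq 0$.

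For strict diagonal dominance, I compute the row sums of $A^{(k+1)}$ in row $i$. Using Lemma~\ref{propertyOfg}, the partial sums $\sum_{j=0}^{p}g_j^{(\beta)}>0$ for every $p\geq 0$ and $\sum_{j=0}^{q}g_j^{(\gamma)}<0$ for every $q\geq 1$. The row of $D_+^{(k+1)}G_\beta$ sums to $d_{+,i}^{(k+1)}\sum_{j=0}^{i-1}g_j^{(\beta)}\geq 0$ and, symmetrically, the row of $D_-^{(k+1)}G_\beta^\top$ sums to $d_{-,i}^{(k+1)}\sum_{j=0}^{m-1-i}g_j^{(\beta)}\geq 0$. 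For the $\gamma$-blocks, the row sum of $E_+^{(k+1)}G_\gamma$ equals $e_{+,i}^{(k+1)}\sum_{j=0}^{i}g_j^{(\gamma)}$ when $1\leq i\leq m-2$ and $e_{+,m-1}^{(k+1)}\sum_{j=1}^{m-1}g_j^{(\gamma)}$ on the boundary row; in either case the sum is negative, so multiplication by $-\omega_2$ gives a non-negative contribution, and the same applies to $E_-^{(k+1)}G_\gamma^\top$. Adding the leading $1$ from $I_{m-1}$, every row of $I_{m-1}+A^{(k+1)}$ has sum $\geq 1>0$, which, since the off-diagonals are $\leq 0$, is precisely strict diagonal dominance. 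The $M$-matrix property and nonsingularity follow at once.

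The one place that requires care, and the step I expect to be the main obstacle, is the boundary analysis of the $\gamma$-blocks: when $i=m-1$ the row of $E_+^{(k+1)}G_\gamma$ is truncated (the $q=i+1$ column falls outside the matrix), and symmetrically for $i=1$ with $E_-^{(k+1)}G_\gamma^\top$. In these truncated rows one loses the $j=0$ term $g_0^{(\gamma)}=1$, so I have to verify separately that $\sum_{j=1}^{m-1}g_j^{(\gamma)}=\bigl(\sum_{j=0}^{m-1}g_j^{(\gamma)}\bigr)-1<-1<0$ using Lemma~\ref{propertyOfg}, which keeps the sign argument intact. Once this boundary case is cleared, the rest is mechanical bookkeeping.
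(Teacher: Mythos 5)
Your proposal is correct and follows essentially the same route as the paper: verify the $Z$-matrix sign pattern and strict (row-wise) diagonal dominance of $I_{m-1}+A^{(k+1)}$ from the entries of $G_\beta$, $G_\gamma$ and the sign properties in Lemma~\ref{propertyOfg}, then invoke the standard fact that a strictly diagonally dominant $Z$-matrix with positive diagonal is a nonsingular $M$-matrix. The only cosmetic difference is that you control the truncated rows via the strict signs of the partial sums $\sum_{j=0}^{p}g_j^{(\beta)}>0$ and $\sum_{j=0}^{q}g_j^{(\gamma)}<0$ (handling the boundary rows explicitly), whereas the paper bounds the finite row sums by the full series $\sum_{j=0}^{\infty}g_j^{(\beta)}=\sum_{j=0}^{\infty}g_j^{(\gamma)}=0$, which treats all rows uniformly.
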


\begin{proof}
Let $a_{ij}^{(k+1)}$ be the $(i,j)$ entry of the matrix $A^{(k+1)}$ in~\eqref{eq:FD2}. Note that we have from~\eqref{eq:FD2},
\begin{equation}
\begin{array}{l}
a_{ii}^{(k+1)} - \sum\limits_{j=1,j\neq i}^{m-1} |a_{ij}^{(k+1)}| \\
=
\omega_1 \left (d_{+,i}^{(k+1)}
 +  d_{-,i}^{(k+1)} \right)   g_0^{(\beta)}
-
\omega_1 \bigg (d_{+,i}^{(k+1)}  \sum\limits^{i-1}_{j=1} g_j^{(\beta)}
 +  d_{-,i}^{(k+1)}  \sum\limits^{m-i-1}_{j=1} g_j^{(\beta)}\bigg)
 \\
 -
 \omega_2  \left (e_{+,i}^{(k+1)}+ e_{-,i}^{(k+1)} \right)g_1^{(\gamma)}
-
 \omega_2  \bigg (e_{+,i}^{(k+1)} \sum\limits^{i}_{j=0,j\neq 1} g_j^{(\gamma)} +  e_{-,i}^{(k+1)} \sum\limits^{m-i}_{j=0,j\neq 1} g_j^{(\gamma)} \bigg)
\\
\geq
 \omega_1 \left (d_{+,i}^{(k+1)}
 +  d_{-,i}^{(k+1)} \right)   g_0^{(\beta)}
-
\omega_1 \left (d_{+,i}^{(k+1)}  +  d_{-,i}^{(k+1)} \right) \sum\limits^{\infty}_{j=1} g_j^{(\beta)}
 \\
 -
 \omega_2  \left (e_{+,i}^{(k+1)}+ e_{-,i}^{(k+1)} \right)g_1^{(\gamma)}
-
 \omega_2  \left (e_{+,i}^{(k+1)} +  e_{-,i}^{(k+1)} \right) \sum\limits^{\infty}_{j=0,j\neq 1} g_j^{(\gamma)} = 0.
\end{array}
\end{equation}
At first glance,  this implies that the coefficient matrix $I_{m-1} + A^{(k+1)}$ is strictly diagonally dominant and $(I_{m-1}+A^{(k+1)}) \mathbf{1} > 0$, where $\mathbf{1}$ is a vector of length $n-1$ with all entries equal to one. We observe further that $a_{i,j}\leq 0$, for all $i\neq j$,  that is, the matrix $I_{m-1} + A^{(k+1)}$ is a $Z$-matrix.  This completes the proof.
\end{proof}

With the aid of Lemma~\ref{thm:Mmatrix}, we can point out quickly that the solution of~\eqref{eq:FD2} is unique.
More significantly, since~\eqref{eq:FD2} is a matrix representation of~\eqref{eq:alpha0},
we then come up with the following result.

\begin{corollary}\label{cor:uni2}
The difference method~\eqref{eq:alpha0} is uniquely solvable.
\end{corollary}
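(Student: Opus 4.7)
The plan is to leverage Theorem~\ref{thm:Mmatrix} together with the matrix form~\eqref{eq:FD2} of the scheme and then proceed by induction in time.

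First I would recall that, by construction, the implicit difference method~\eqref{eq:alpha0} is exactly equivalent to the linear system $(I_{m-1}+A^{(k+1)})\mathbf{u}^{(k+1)}=\mathbf{b}^{(k+1)}$ displayed in~\eqref{eq:FD2}, where $\mathbf{b}^{(k+1)}$ depends only on $\mathbf{u}^{(0)},\dots,\mathbf{u}^{(k)}$ and on the known source term $\mathbf{f}^{(k+1)}$. Thus, given the previous time levels, advancing to level $k+1$ amounts to solving a single linear system in the unknown vector $\mathbf{u}^{(k+1)}$.

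Next I would invoke Theorem~\ref{thm:Mmatrix}, which states that the coefficient matrix $I_{m-1}+A^{(k+1)}$ is a nonsingular, strictly diagonally dominant $M$-matrix. Nonsingularity alone already guarantees that the linear system~\eqref{eq:FD2} has a unique solution $\mathbf{u}^{(k+1)}$ for every prescribed right-hand side $\mathbf{b}^{(k+1)}$.

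To finish, I would set up a straightforward induction on the time index $k$. For the base case, $\mathbf{u}^{(0)}$ is uniquely determined by the discrete initial condition $u_i^{(0)}=\phi(x_i)$. For the inductive step, assuming $\mathbf{u}^{(0)},\dots,\mathbf{u}^{(k)}$ are already uniquely determined, the vector $\mathbf{b}^{(k+1)}$ is uniquely defined, and by the nonsingularity just noted, $\mathbf{u}^{(k+1)}$ is uniquely determined as well. Carrying the induction up to $k+1=n$ yields the uniqueness of the full numerical solution.

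There is no real obstacle here beyond bookkeeping: the substantial work has already been discharged by Theorem~\ref{thm:Mmatrix} (producing the $M$-matrix structure) and by the derivation of the matrix form~\eqref{eq:FD2}. The corollary is thus essentially a one-line consequence wrapped in a trivial time-stepping induction.
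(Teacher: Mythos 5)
Your argument is correct and matches the paper's reasoning: the paper also deduces unique solvability directly from Theorem~\ref{thm:Mmatrix}, since the nonsingularity of $I_{m-1}+A^{(k+1)}$ makes the matrix form~\eqref{eq:FD2} uniquely solvable at each time level. Your added time-stepping induction is just explicit bookkeeping of what the paper leaves implicit.
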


By now, we have completed the proof of the unique solvability of the implicit difference scheme given in~\eqref{eq:alpha0}.
We are now ready to apply the popular and effective iterative methods, the CGNR and GMRES methods, to solve~\eqref{eq:FD2}. In section~\ref{numerical}, we will see that while solving large-scale equations, the systems would become nearly singular and ill-conditioned. For such problems, we apply the preconditioner
technique to accelerate the iterative process.
%
%
%
%

To this purpose, we start by decomposing matrices $G_\beta$ and $G_\gamma$ as
\begin{eqnarray*}
G_\beta &=& G_{\beta,\ell} + (G_\beta -G_{\beta,\ell}),\\[2mm]
G_\gamma &=& G_{\gamma,\ell} + (G_\gamma -G_{\gamma,\ell}),
\end{eqnarray*}
where
\begin{eqnarray*}
G_{\beta,\ell} &=& \left[\begin{array}{ccccc}
g_0^{(\beta)} &  &  & \\
\vdots &g_0^{(\beta)} &   &  \\
g_{\ell-1}^{(\beta)} &  & \ddots &  \\
  & \ddots &  & \ddots &  \\
  &   & g_{\ell-1}^{(\beta)}& \cdots & g_0^{(\alpha)}
\end{array}\right] +
\left[\begin{array}{cccccc}
0 &  &  && \\
 &\ddots &  &  &\\
 &  & 0 & & \\
  &  &  & g_{\ell}^{(\beta)} &  & \\
  &   &  &  & \ddots&\\
    &   &  &  &  & \sum_{j=\ell}^{m-2} g_j^{(\beta)}
  \end{array}\right], \\[2mm]
  G_{\gamma,\ell} &=& \left[\begin{array}{ccccc}
g_1^{(\gamma)} & g_0^{(\gamma)} &  & \\
\vdots &g_1^{(\gamma)} & g_0^{(\gamma)} &  \\
g_{\ell}^{(\gamma)} &  & \ddots & \ddots \\
  & \ddots &  & \ddots & g_0^{(\alpha)} \\
  &   & g_{\ell}^{(\gamma)}& \cdots & g_1^{(\gamma)}
\end{array}\right] +
\left[\begin{array}{cccccc}
0 &  &  && \\
 &\ddots &  &  &\\
 &  & 0 & & \\
  &  &  & g_{\ell+1}^{(\gamma)} &  & \\
  &   &  &  & \ddots&\\
    &   &  &  &  & \sum_{j=\ell+1}^{m-1} g_j^{(\gamma)}
  \end{array}\right].
\end{eqnarray*}
Namely, the matrix $A^{(k+1)}$ can be decomposed as
\begin{equation*}
A^{(k+1)} = A^{(k+1)}_\ell + B^{(k+1)}_\ell,
\end{equation*}
where
\begin{eqnarray*}
A^{(k+1)}_\ell&=& \omega_1( D_+^{k+1} G_{\beta,\ell} + D_-^{k+1} G_{\beta,\ell}^\top) -\omega_2
(E_+^{k+1} G_{\gamma,\ell} + E_-^{k+1} G_{\gamma,\ell}^\top),\\[2mm]
B^{(k+1)}_\ell &=& A^{(k+1)} - A^{(k+1)}_\ell .
\end{eqnarray*}

Note that from Lemma~\ref{propertyOfg}, it is easy to show that the Toeplitz matrices $G_\beta$ and $-G_\gamma $ are $M$-matrices and
strictly diagonally dominant. This implies that the matrices $G_{\beta,\ell}$ and $G_{\gamma,\ell}$ are thus strictly diagonally dominant
$M$-matrices, since the matrices $G_{\beta,\ell}$ and $G_{\gamma,\ell}$ have the same row sums as $G_{\beta}$
and $G_{\gamma}$, respectively. In this way, the following fact can be realized directly.

\begin{theorem}\label{thm:propAk}
The matrix $I_{m-1} + A_\ell^{(k+1)}$ is a nonsingular, strictly diagonally dominant  M-matrix for all $\ell$.
\end{theorem}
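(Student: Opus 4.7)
The plan is to mirror the proof of Theorem~\ref{thm:Mmatrix}: first I would verify that $I_{m-1} + A_\ell^{(k+1)}$ is a $Z$-matrix, then I would show that it is strictly diagonally dominant, after which the non-singular $M$-matrix conclusion follows from the standard characterization.

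For the $Z$-matrix step, I would inspect each summand in
\[
A_\ell^{(k+1)} = \omega_1\bigl(D_+^{(k+1)} G_{\beta,\ell} + D_-^{(k+1)} G_{\beta,\ell}^\top\bigr) - \omega_2\bigl(E_+^{(k+1)} G_{\gamma,\ell} + E_-^{(k+1)} G_{\gamma,\ell}^\top\bigr).
\]
In the splittings of $G_{\beta,\ell}$ and $G_{\gamma,\ell}$ displayed just above the statement, the correction is purely diagonal and therefore does not disturb the off-diagonal sign pattern. The banded Toeplitz part of $G_{\beta,\ell}$ inherits the sign structure of $G_\beta$ ($g_0^{(\beta)} > 0$ on the diagonal and $g_j^{(\beta)} \le 0$ off the diagonal, by Lemma~\ref{propertyOfg}), and the banded part of $G_{\gamma,\ell}$ inherits that of $G_\gamma$ ($g_1^{(\gamma)} < 0$ on the diagonal and $g_j^{(\gamma)} \ge 0$ elsewhere). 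Since $D_\pm^{(k+1)}$ and $E_\pm^{(k+1)}$ are non-negative diagonal matrices and $\omega_1,\omega_2 > 0$, each of the four summands contributes non-positive off-diagonal entries. Hence $I_{m-1} + A_\ell^{(k+1)}$ is a $Z$-matrix.

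For strict diagonal dominance, I would exploit the structural observation highlighted right before the statement: by construction, $G_{\beta,\ell}$ and $G_{\gamma,\ell}$ share the same row sums as $G_\beta$ and $G_\gamma$, respectively. Because the off-diagonals of $A_\ell^{(k+1)}$ are non-positive, for each row $i$ the quantity (diagonal entry) $-$ (sum of absolute values of off-diagonals) is exactly the $i$-th row sum of $A_\ell^{(k+1)}$, which equals the $i$-th row sum of $A^{(k+1)}$. Consequently the explicit estimate carried out in the proof of Theorem~\ref{thm:Mmatrix}, based on Lemma~\ref{propertyOfg} together with $\sum_{j=0}^{\infty} g_j^{(\beta)} = 0$ and $\sum_{j=0}^{\infty} g_j^{(\gamma)} = 0$, transcribes verbatim and yields that this quantity is $\ge 0$. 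Adding the unit diagonal from $I_{m-1}$ upgrades the inequality to strict diagonal dominance and simultaneously gives $(I_{m-1} + A_\ell^{(k+1)})\mathbf{1} > 0$. The standard fact that a strictly diagonally dominant $Z$-matrix with positive diagonal is a non-singular $M$-matrix then closes the argument.

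The main obstacle I anticipate is not in any deep estimate but in the bookkeeping at boundary rows, where the banded Toeplitz truncation does not reach the right end of the row and the diagonal correction must absorb the missing tail. However, the decomposition was engineered precisely so that $G_{\beta,\ell}$ and $G_{\gamma,\ell}$ preserve the row sums of $G_\beta$ and $G_\gamma$, so this boundary bookkeeping reduces cleanly to the computation already performed in Theorem~\ref{thm:Mmatrix} and no new estimates are required.
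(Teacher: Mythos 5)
Your proposal is correct and follows essentially the same route as the paper: the paper's (very brief) justification rests on exactly the two facts you use, namely that the diagonal corrections make $G_{\beta,\ell}$ and $G_{\gamma,\ell}$ keep the row sums of $G_\beta$ and $G_\gamma$ while preserving the off-diagonal sign pattern from Lemma~\ref{propertyOfg}, so the diagonal-dominance computation of Theorem~\ref{thm:Mmatrix} carries over verbatim. Your explicit handling of the boundary rows and the $Z$-matrix check simply spells out what the paper leaves as ``can be realized directly.''
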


In addition, Lemma~\ref{propertyOfg} implies that
{\small
\begin{eqnarray*}
&&\displaystyle \frac{ \| (I_{m-1} + A^{(k+1)})-(I_{m-1} + A^{(k+1)}_\ell) \|_\infty}{ \| I_{m-1} - A^{(k+1)} \|_\infty} \\[2mm]
&&
\leq \frac{ \frac{1}{h^{\gamma}}\|
( D_+^{k+1} (G_\beta - G_{\beta,\ell}) + D_-^{k+1} (G_\beta-G_{\beta,\ell})^\top) -
(E_+^{k+1} (G_\gamma - G_{\gamma,\ell}) + E_-^{k+1} (G_\gamma-G_{\gamma,\ell})^\top)
\|_\infty}{
\frac{1}{h^{\beta}}
\|  (D_+^{(k+1)} G_\beta + D_-^{(k+1)} G_\beta^\top) -
(E_+^{(k+1)} G_\gamma + E_-^{(k+1)} G_\gamma^\top) \|_\infty } =  O\big( k^{-\beta}\big),
\end{eqnarray*}
}
since $\|G_\beta - G_{\beta,\ell}\|_\infty = \mathcal{O}(k^{-\beta})$, $\|G_\gamma - G_{\gamma,\ell}\|_\infty = \mathcal{O}(k^{-\gamma})$, and $h= (b-a)/m$~\cite{Lin2014}. Namely, the relative difference between $I_{m-1} + A^{(k+1)}$ and $I_{m-1} + A_\ell^{(k+1)}$
can become very small while $k$ becomes large enough.
Observe further that
the banded matrix $I_{m-1} + A_\ell^{(k+1)}$ is a sparse matrix consisting of $2\ell-1$ nonzero diagonal entries.
With this in hand, an efficient precoditioner for the linear system~\eqref{eq:FD2} is attainable by simply choosing $I_{m-1} + A_\ell^{(k+1)}$. We assume here that the reader is familiar with the fundamental terminology and iterative approaches of the preconditioned GMRES and CGNR methods. For a comprehensive understanding of such iterative techniques, the reader is referred to the monograph \cite{Saad2003} written by Saad.

\subsection{Preconditioned GMRES method}

The GMRES method, proposed in 1986 in~\cite{Saad1986},
is one of the most popular and effective methods for solving nonsymmetric linear systems.
However, for large sparse systems, one might try to apply preconditioning techniques
to reduce the condition number, and hence improve the convergence rate.
Let $P^{(k+1)}_\ell:= I_{m-1} + A_\ell^{(k+1)}$. Our purpose here is to
replace the linear system~\eqref{eq:FD2} by the preconditioned linear system
\begin{eqnarray}\label{eq:GMRESFD2}
(P^{(k+1)}_\ell)^{-1}(I_{m-1} +  A^{(k+1)}) \mathbf{u}^{(k+1)} = (P^{(k+1)}_\ell)^{-1}\mathbf{b}^{(k+1)}
\end{eqnarray}
with the same solution.
We then solve~\eqref{eq:GMRESFD2} in terms of the left-preconditioned GMRES method proposed in~\cite{Lin2014}. 
To make this work more self-contained, we quote this method as follows:

\noindent\rule[0.1\baselineskip]{\textwidth}{1pt}\\[-0.2em]
\rm{Preconditioned GMRES($\rho$) method} \\[-0.6em]
\noindent\rule[0.1\baselineskip]{\textwidth}{1pt}

At each time step $t^{(k+1)}$, we choose $\mathbf{u}_0$
as initial guess  for $\mathbf{u}^{(k+1)}$

Set $\mu :=0$, and compute the LU factorization:
$P_\ell^{(k+1)} = LU$

Compute $\mathbf{r} := \mathbf{b}^{(k+1)} - (I_{m-1} + A^{(k+1)}) \mathbf{u}^{(k+1)} $,
and assign $\mathbf{r}_t := \mathbf{r}$

While $\mu \leq IterMax$ and $\| \mathbf{r}_t \|_2/\|\mathbf{b}^{(k+1)}\|_2 > \epsilon$ do

\quad $\mu:=\mu +1$

\quad Compute $\mathbf{r}_w := U^{-1}L^{-1}\mathbf{r}$, $\beta:=\| \mathbf{r}_w \|$,
$\mathbf{v}_1:= \mathbf{r}_w/\beta$

\quad  Assign $j:=0$ and $V_1:= \mathbf{v}_1$

\quad  While $j\leq \rho$ and $\| \mathbf{r}_t \|_2/\|\mathbf{b}^{(k+1)}\|_2 > \epsilon$ do

\qquad  $j:=j+1$

\qquad  Compute $\mathbf{w} :=U^{-1}L^{-1}(I_{m-1} + A^{(k+1)})\mathbf{v}_j$

\qquad  For $i=1,\ldots,j$ do

\qquad \quad  $h_{i,j}=\mathbf{v}^T_i\ast \mathbf{w}$

\qquad \quad  $\mathbf{w}:=\mathbf{w} - h_{i,j}\mathbf{v}_i$

\qquad  Enddo

\qquad  Compute $h_{j+1,j}=\|\mathbf{w}\|_2$ and $\mathbf{v}_{j+1}:=\mathbf{w}/h_{j+1,j}$

\qquad  Assign  $V_{j+1}:= [ V_{j},\mathbf{v}_{j+1}]$ and
${H_j}:=[h_{\gamma,\delta}]_{1\leq \gamma \leq j+1,1\leq \delta \leq j}$

\qquad  Compute $\mathbf{y}_j:= \mbox{argmin}_{\mathbf{y}}\|\beta \mathbf{e}_1
- {H_j}\mathbf{y}\|_2$

\qquad  Compute the residual $\mathbf{r}_t := \mathbf{r} - LUV_{j+1} {H_j}\mathbf{y}_j$

\quad  Enddo

\quad  $\mathbf{r}:=\mathbf{r}_t$

\quad  $\mathbf{u}^{(k+1)} := \mathbf{u}^{(k+1)}+ V_{j}\mathbf{y}_j$

Enddo\\[-0.3em]
\noindent\rule[0.15\baselineskip]{\textwidth}{1pt}

Here, $IterMax$ denotes the maximal number of iteration,
$\epsilon$ denotes the given relative accuracy of the residual, $\rho$ denotes that the GMRES method is restarted after $\rho$ iterations, and the symbols $\mathbf{r}_t$ and $\mathbf{r}_w$ represent the current residual of the original linear system~\eqref{eq:FD2} and that of the preconditioned linear system~\eqref{eq:GMRESFD2}, accordingly. Associated with this preconditioned method,  two major portions of the computational work are:
 \begin{itemize}
\item  the computation of $\mathbf{w}=U^{-1}L^{-1}(I_{m-1} + A^{(k+1)})\mathbf{v}_j$ and
\item  the computation of $\mathbf{r}_t= \mathbf{r}- LUV_{j+1} H_j \mathbf{y}_j$.
\end{itemize}
We observe from~\eqref{eq:Ak} that
\begin{eqnarray*}
A^{(k+1)}\mathbf{v} & =
 \omega_1 (D_+^{(k+1)} G_\beta \mathbf{v} + D_-^{(k+1)} G_\beta^\top \mathbf{v}) -  \omega_2
(E_+^{(k+1)} G_\gamma \mathbf{v} + E_-^{(k+1)} G_\gamma^\top \mathbf{v}),
\end{eqnarray*}
where  $G_\gamma$ and $G_\beta$ are two $(m-1)-$by$-(m-1)$ Toeplize matrices
and can be stored only with $m-1$ and $m$ entries, respectively. This implies that the major work for computing $A^{(k+1)}\mathbf{v}$ includes
four Toeplitz matrix-vector multiplications,
$G_\beta \mathbf{v}$, $G_\beta^\top \mathbf{v}$, $G_\gamma \mathbf{v}$ and $G_\gamma^\top \mathbf{v}$, which can be obtained by using the fast Fourier transform (FFT) with only $\mathcal{O}((m-1)\log (m-1))$ operations~\cite{Chan1996, Chan2007, Pang2012}. What might be important to note is that based on the specific structure of the matrix $G_s$, where $s = \beta$ or $\gamma$,  the calculations of
$G_s \mathbf{v}$ and $G_s^\top \mathbf{v}$ can be done simultaneously, by computing
$G_\beta(\mathbf{v} + \sqrt{-1}\hat{\mathbf{v}})$,
where $\hat{\mathbf{v}}=(v_{m-1},v_{m-2},\ldots,v_1)^\top$.

Since the matrix $P^{(k+1)}_\ell$  is banded and strongly diagonally dominant,  $P^{(k+1)}_\ell$ admits a banded $LU$ factorization~\cite[Proposition2.3]{Demmel1997}, i.e.,
\begin{equation}\label{eq:LU}
P^{(k+1)}_\ell = LU,
\end{equation}
where $L$ and $U$ are banded with bandwidth {$\ell$}
and can be obtain in about $\mathcal{O}((m-1)\ell^2)$ operations when $\ell$
is small compared to $(m-1)$.
This implies that given a vector $\mathbf{x}$ of an appropriate size,
the matrix-vector multiplications $L\mathbf{x}$, $U\mathbf{x}$, $L^{-1}\mathbf{x}$, and $U^{-1}\mathbf{x}$ require only $\mathcal{O}((m-1)\ell)$ operations. Thus, the computation of the vector $\mathbf{w}$
requires $\mathcal{O}((m-1)\log (m-1))$ operations, and
the computation of the vector $\mathbf{r}_t$  requires $\mathcal{O}((m-1)(j+\ell))$
operations since $V_{j+1}$ and $H_j$ are matrices of sizes $(m-1)-$by$-(j+1)$ and
$(j+1)-$by$-j$.

\subsection{Preconditioned CGNR method}

For solving the nonsymmetric linear system~\eqref{eq:FD2}, one might consider the application of the conjugate gradient (CG) method to the normal equation
\begin{equation}\label{eq:normal}
(I_{m-1} + A^{(k+1)})^\top (I_{m-1} + A^{(k+1)}) \mathbf{u}^{(k+1)} = (I_{m-1} + A^{(k+1)})^\top \mathbf{b}^{(k+1)}.
\end{equation}
This approach is known as CGNR. One disadvantage of applying the CG method directly to the equation~\eqref{eq:normal} is that the condition number of $(I_{m-1} + A^{(k+1)})^\top (I_{m-1} + A^{(k+1)})$ is the square of that of $I_{m-1} + A^{(k+1)}$. Thus, the convergence process
of the CGNR method would be very slow. To accelerate the entire process, we choose $(P^{(k+1)}_\ell)^\top P^{(k+1)}_\ell$ as
the preconditioner for the normal equation (\ref{eq:normal}).

Note that the main computational works in the preconditioned CGNR method include two parts~\cite{Saad2003}. One is the matrix-vector multiplication $(I_{m-1} + A^{(k+1)})^\top(I_{m-1} + A^{(k+1)}) \mathbf{v}$ for some vector $\mathbf{v}$. The other is the calculation of the solution of the linear system $\big(P^{(k+1)}_\ell\big)^\top
P^{(k+1)}_\ell \mathbf{w} = \mathbf{z} $ for some vectors $\mathbf{w}$ and $\mathbf{z}$.  Of course, like the preconditioned GMRES method, the calculation of the matrix-vector multiplication
$(I_{m-1} + A^{(k+1)})^\top (I_{m-1} + A^{(k+1)})\mathbf{v}$ can be done efficiently
by applying the fast algorithm, FFT, to the {Toeplitz-like} structure of the resulting matrix $A^{(k+1)}$ with $\mathcal{O}((m-1)\log(m-1))$ operations. Similarly, from~\eqref{eq:LU}, we know that the solution of $\big(P^{(k+1)}_\ell\big)^\top
P^{(k+1)}_\ell \mathbf{w} = \mathbf{z} $ can be obtained with only
$\mathcal{O}((m-1)\ell)$ operations.

\section{Numerical experiments}\label{numerical}

In this section, we present an example to demonstrate the performance
of preconditioned iterative methods versus unconditioned iterative methods.
 For all methods, the initial values are chosen to be
\[
\mathbf{v}_0 =
\left\{
\begin{array}{ll}
\mathbf{u}^{(0)} :=
\left[\phi(x_1), \ldots,  \phi(x_{m-1}) \right]^\top, & k=1, \\[0.1in]
2\mathbf{u}^{(k)}-\mathbf{u}^{(k-1)}, & k>1.
\end{array}
\right.
\]
as {suggested in~\cite{Wang2010}} and
the stopping criterion is
\[
\frac{\| \mathbf{r}_j \|_2}{\| \mathbf{b}^{(k+1)}\|_2} < 10^{-7},
\]
where $\mathbf{r}_j$ is the residual vector after $j$th iteration.

\medskip
\begin{example}\label{Example1}
Consider the equation {\rm (\ref{FDEs})} with $\alpha = 0.8$, $\beta =0.6$, and $\gamma = 1.8$. The left-sided and
right-sided diffusion coefficients are given by
\[
\begin{array}{lcl}
 d_{+}(x,t)= 6(1+t)x^{0.6} , &\qquad&  d_{-}(x,t) = 6(1+t)(1-x)^{0.6},\\[0.12in]
 e_{+}(x,t)= 6(1+t)x^{1.8} , &\qquad&  e_{-}(x,t) = 6(1+t)(1-x)^{1.8}.
 \end{array}
\]
with the spatial interval $ \Omega = (0,1)\times(0,1)$ and the time interval $[0,T] =[0,1]$. The source term and the initial condition are given by
\begin{equation*}
\begin{array}{l}
f(x,t) = e^{t}\bigg[6(1+t)\Big( \big(\frac{\Gamma(4)}{\Gamma(3.4)}
- \frac{\Gamma(4)}{\Gamma(2.2)} \big)\big( x^{3} + (1-x)^{3} \big) -
\big( \frac{3\Gamma(5)}{\Gamma(4.4)} - \frac{3\Gamma(5)}{\Gamma(3.2)} \big) \big( x^{4} + (1-x)^{4} \big)  \\[0.12in]
\qquad\qquad + \big(\frac{3\Gamma(6)}{\Gamma(5.4)}-\frac{3\Gamma(6)}{\Gamma(4.2)}\big) \big( x^{5} + (1-x)^{5} \big) -
\big( \frac{\Gamma(7)}{\Gamma(6.4)}-\frac{\Gamma(7)}{\Gamma(5.2)}\big) \big( x^{6}+ (1-x)^{6}\big) \Big)
+ x^3(1-x)^3 \bigg]
\end{array}
\end{equation*}
and
\begin{equation*}
u(x,0) = x^3(1-x)^3.
\end{equation*}
It can be shown by a direct computation that the solution to the fractional diffusion equation is
\[
u(x,t) = e^{t}x^3(1-x)^3.
\]
\end{example}

\begin{table}[ht]\renewcommand{\arraystretch}{1.1}
\caption{The average number of iterations for Example \ref{Example1}}

\begin{center}
\begin{tabular}{cccccc}
\midrule[1.2pt]
   $m=n$  &  GMRES(20)  &  PGMRES(20) &  CGNR &  PCGNR & error\\
\hline
 $16$ &   8.000  &  3.063   &    12.438 &   3,125  & $4.6312 \times 10^{-4}$ \\
 $32$ &  16.000  &  3.938   &    32.594 &   4.063  & $2.4162 \times 10^{-4}$ \\
 $64$ &  84.969  &  4.063   &   100.547 &   4.813  & $1.3320 \times 10^{-4}$ \\
 $128$ & 231.781  &  5.055   &   318.852 &   4.797  & $7.5522 \times 10^{-5}$ \\
 $256$ & 486.859  &  6.082   &  1060.191 &   5.148  & $4.5765 \times 10^{-5}$ \\
\midrule[1.2pt]
\end{tabular}
\end{center}
\label{t1}
\end{table}

\begin{table}[ht]\renewcommand{\arraystretch}{1.1}
\caption{The required CPU times for Example \ref{Example1}}

\begin{center}
\begin{tabular}{ccccc}
\midrule[1.2pt]
   $m=n$  &  GMRES(20)  &  PGMRES(20) & CGNR & PCGNR \\
\hline
 $16$ &   0.0046    &   0.0310    &   0.0620    &   0.0320  \\
 $32$ &   0.1400    &   0.0470    &   0.2810    &   0.0780  \\
 $64$ &   1.4510    &   0.1560    &   1.7000   &   0.1870  \\
 $128$ &  10.9200    &   0.5930    &  18.2210    &   0.7330  \\
 $256$ &  55.5830    &   3.7120    & 162.7550   &   4.1340  \\
\midrule[1.2pt]
\end{tabular}
\end{center}
\label{t2}
\end{table}

\begin{table}[ht]\renewcommand{\arraystretch}{1.1}
\caption{Condition numbers for relevant matrices for Example \ref{Example1}.}

\begin{center}
\begin{tabular}{ccccccc}
\midrule[1.2pt]
   $m=n$  &  $16$  &  $32$  &  $64$ & $128$ & $256$ \\
\hline
 $k\big(\hat{A}\big)$ &  48.86    & 162.84  &   491.07   &   1.34e+3   &   3.34e+3 \\
  $k\big((P^{(1)}_8 )^{-1}\hat{A}\big)$ & 1.05   & 1.17   &   1.29   &   1.47   &   1.79 \\
 $k\big( \hat{A}^\top\hat{A} \big)$ & 2.39e+3    & 2.65e+4  &  2.41e+5   &  1.79e+6   &   1.16e+7 \\
 $k\big(\big( (P^{(1)}_8)^\top P^{(1)}_8 \big )^{-1}
 \hat{A}
^\top
 \hat{A}
 \big)$ &  1.88 & 20.65 & 193.57  & 960.76 & 3.46e+3 \\
\midrule[1.2pt]
\end{tabular}
\end{center}
\label{t3}
\end{table}

\begin{figure}[ht]\renewcommand{\arraystretch}{1.2}
 \centering
\subfigure[ Spectrum of $\hat{A}$]{\includegraphics[width=0.45\textwidth]{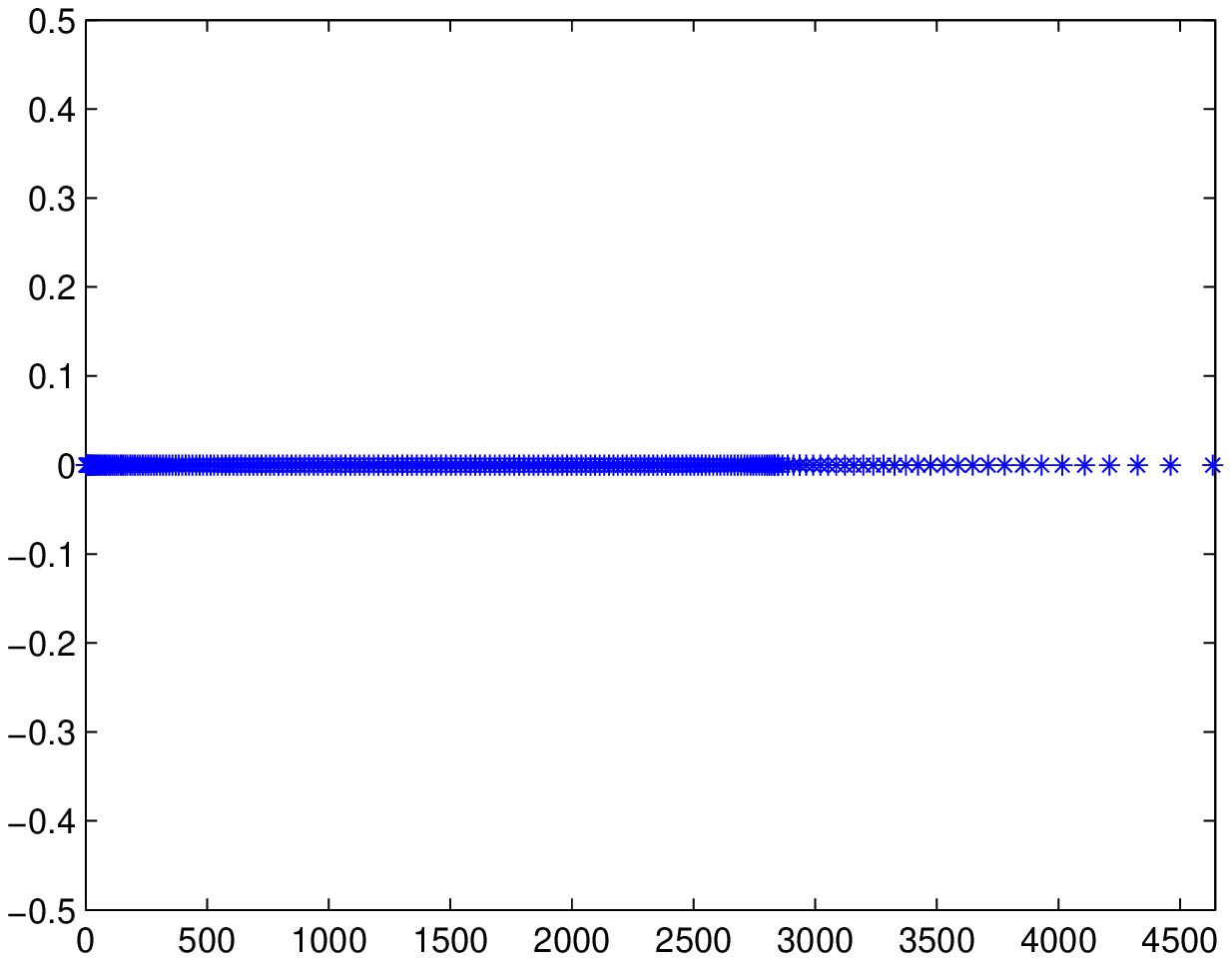}}
\subfigure[ Spectrum of $(P^{(1)}_8 )^{-1}\hat{A}$ ]{\includegraphics[width=0.45\textwidth]{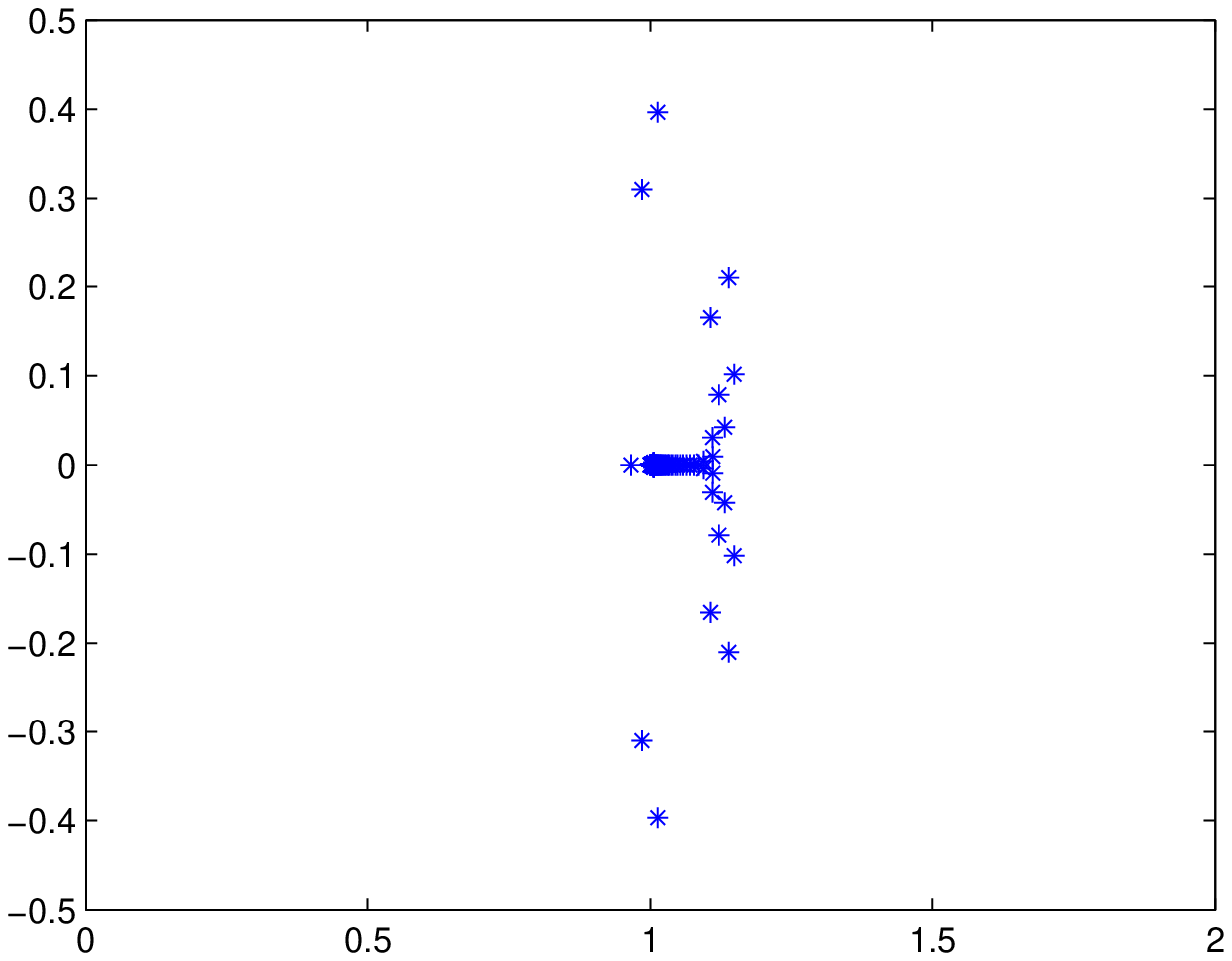}}\\
\subfigure[ Spectrum of $\hat{A}^\top \hat{A}$]{\includegraphics[width=0.45\textwidth]{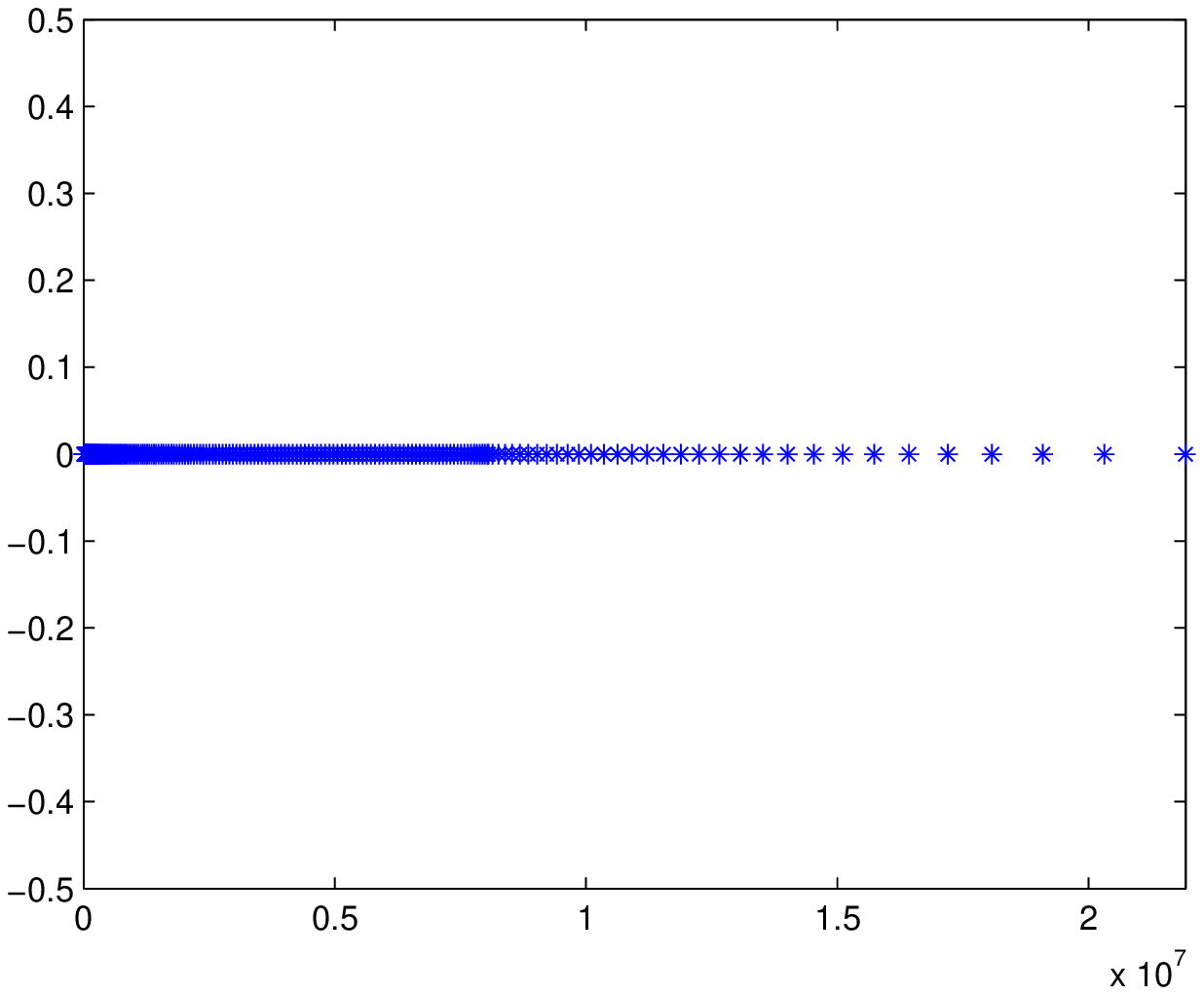}}
\subfigure[ \text{Spectrum of} $\big( (P^{(1)}_8)^\top P^{(1)}_8 \big )^{-1} \hat{A}^\top\hat{A}$]
           {\includegraphics[width=0.45\textwidth]{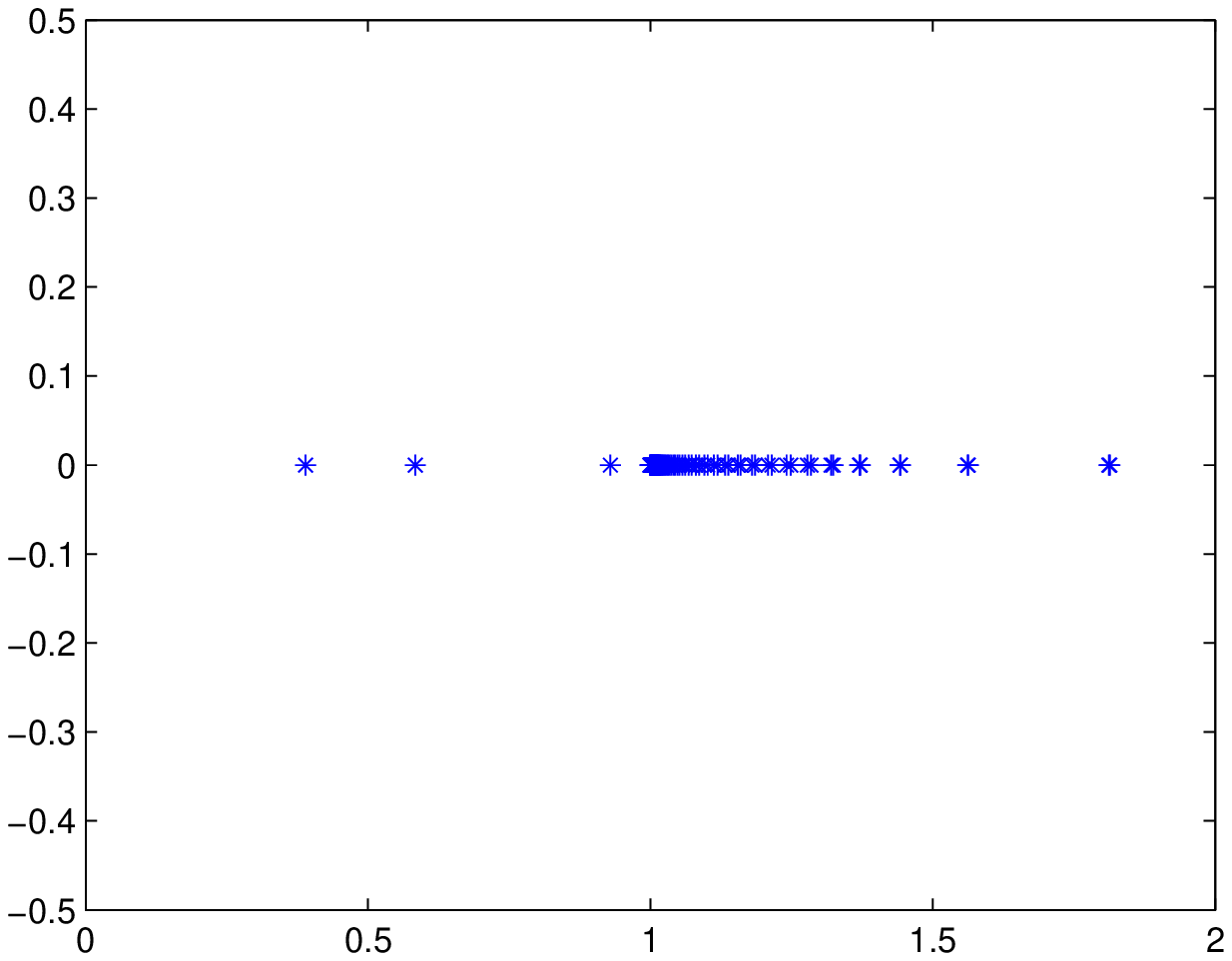}}
 \caption{The Spectra of the unpreconditioned coefficient matrices and the preconditioned coefficient matrices with $m=n=256$.}
 \label{f1}
\end{figure}

The numerical results 
were obtained
by using MATLAB R2010a on a Lenovo Laptop Intel(R) Core(TM)2 Duo of 2.20 GHz CPU and 2GB RAM. We set the bandwidth $\ell$ of the preconditioner $P^{(k+1)}_\ell$ equal to $8$ and use
$``m"$  and $``n" $ to represent the numbers of the spatial partition and the number of the temporal partition, respectively.
%
%
In Tables \ref{t1} and \ref{t2}, we present the average numbers of iterations,
{
the errors computed by the sup-norm between the true solution and the numerical solution
at the last time step}
and the CPU times (seconds) required by GMRES(20), PGMRES(20), CGNR, and PCGNR methods. We see that the number of iterations  and execution time by the GMRES(20) and the CGNR methods increase dramatically, while those by the PGMRES(20) and PCGNR are changed little. The phenomena might be explained by the clustering of the eigenvalues of the relevant coefficient matrices. As an example,
see Figure~\ref{f1} for the distribution of eigenvalues of matrices $\hat{A}$, $\hat{A}^\top \hat{A}$, $(P^{(1)}_8)^{-1}
\hat{A}$ and $\big( (P^{(1)}_8)^\top P^{(1)}_8 \big )^{-1} \hat{A}^\top \hat{A}$ with $m=n=256$.  On the other hand,
we see in Table~\ref{t3} that the effect of the preconditioner on
the condition numbers of the relevant matrices. The reader should be able to notice that the condition number significantly improves with the help of the proposed preconditioner.

\section{Conclusion}

Determining analytic solutions of FDEs is very challenging and remain unknown
for most FDEs. This paper is to present an implicit approach to solve STFDE
with two-sided Gr\"{u}nwald formulae. More significantly, with the aid of~\eqref{eq:FD2},
we can ameliorate the calculation skill by the implementation of efficient and reliable
preconditioning iterative techniques, the PGMRES method and the PCGNR method,
with only computational cost of $\mathcal{O}((m-1)\log (m-1))$.
Numerical results strongly suggest that the efficiency of the proposed preconditioning methods.

\bibliographystyle{elsarticle-num}
\def\cprime{$'$}

\end{document}